\newtheorem{theorem}{Theorem}
\newtheorem{corollary}{Corollary}
\newtheorem{lemma}{Lemma}
\newtheorem{definition}{Definition}
\theoremstyle{remark}
\newtheorem{remark}{Remark}
\def\la{\langle}
\def\ra{\rangle}
\def\as{a.s.}
\def\F{\mathcal{F}}
\def\CB{\mathcal{B}}
\def\CF{\mathcal{F}}
\def\CE{\mathcal{E}}
\def\FE{\mathfrak{E}}
\def\CH{\mathcal{H}}
\def\CA{\mathcal{A}}
\def\CC{\mathcal{C}}
\def\bR{\mathbb{R}}
\def\bN{\mathbb{N}}
\def\bP{\mathbb{P}}
\def\q{\qquad}
\def\loc{\mathrm{loc}}
\newcommand{\uPi}{\Pi}
\def\extreal{\overline{\bR}}
\begin{document}
\title{A martingale representation theorem for a class of jump processes.}
\author{Samuel N. Cohen\thanks{Research supported by the Oxford--Man Institute for Quantitative Finance.}\\Mathematical Institute, University of Oxford\\samuel.cohen@maths.ox.ac.uk}
\date{\today}

\maketitle
\begin{abstract}
We give a bare-hands approach to the martingale representation theorem for integer valued random measures, which allows for a wide class of infinite activity jump processes, as well as all processes with well-ordered jumps. 

Keywords: Martingale representation theorem, jump processes, random measures.

MSC: 60G57, 60G44
\end{abstract}
\section{Introduction}\label{sec:MRT}
A key result in the theory of martingales is the ``Martingale Representation Theorem''. This allows us to say that all martingales, in a certain space, can be represented in terms of stochastic integrals. These results underly much work in mathematical finance (through hedging arguments, see, for example, F\"ollmer and Schied \cite[Theorem 5.38]{Follmer2002}), are the basis for the theory of BSDEs (El Karoui, Peng and Quenez \cite{El1997} gives a review, a general representation is used in \cite{Cohen2010}), and lead to Wiener-like chaos expansions, which appear in the theory of Malliavin calculus (see, for example, Di Nunno, {\O}ksendal and Proske \cite{DiNunno2009}). 

The martingale representation theorem is commonly known in a Brownian setting (where it is due to It\=o \cite{Ito1951}), and has been well studied in the setting of L\`evy processes (see, for example Nualart and Schoutens \cite{Nualart2000}). Elegant abstract results in general spaces were obtained by Kunita and Watanabe \cite{Kunita1967} and Davis and Varaiya \cite{Davis1974}. A significant general result in this area is the Jacod--Yor theorem (see \cite[Section III.4]{Jacod2003} for a presentation), which relates martingale representations with the (non-)existence of equivalent martingale measures. A review of the existing theory and approaches is given by Davis \cite{Davis2005}.

In a series of papers in the mid-1970s, beginning with the work of Chou and Meyer \cite{Chou1975}, martingale representations for jump processes were considered directly, using a `bare-hands' approach to obtaining a representation. Relevant papers include Chou and Meyer \cite{Chou1975}, Davis \cite{Davis1976} and Elliott \cite{Elliott1976, Elliott1977}.  In this approach, which reached its most general point in Elliott \cite{Elliott1976}, the jumps are assumed to be generated by a right-constant process, without any substantial assumptions in terms of memory, or whether the jump times admit a density. However, by starting with a right-constant process, one implicitly assumes that the jumps are well ordered (in the usual ordering through time). Therefore, these results cannot subsume the case of a pure-jump L\`evy process, which may have infinitely many jumps on every interval.

In this paper, we will see how this bare-hands approach can be applied to a wide class (but not all) jump processes. We will show that a martingale representation result, in terms of compensated integer valued random measures, can be obtained for those random measures we call `parochial' -- a class which includes infinite activity L\`evy processes and also all processes with well ordered jumps (see Section \ref{sec:Parochial}). We also give a counterexample to the proposition that all integer-valued random measures yield a martingale representation result.

The paper is structured in the following way. We first give a review of the relevant results and notation for random measures. We then give a version Chou and Meyer's construction of the martingale representation for a single jump process, followed by Elliott's extension to the case of well ordered jumps. We then define the class of parochial random measures, and show how Elliott's result can be extended to these cases. This approach to obtaining martingale representations for infinite activity process has not been pursued before. This is followed by a counterexample, which highlights why our assumptions are needed. The paper concludes by introducing continuous processes to our setting, and showing how a martingale representation result for jump processes can be extended to these cases.

\section{A quick review of random measures}
We here summarize some key definitions and results in the theory of random measures. These can be found in \cite{Elliott1982} or \cite{Jacod2003}, and are presented without proof. We suppose we are working on a probability space $(\Omega,\CF,\bP)$
 which has a complete, right continuous filtration $\{\CF_t\}_{t\ge
 0}.$ We also have an auxillary Blackwell space  $(\CE, \FE)$ in which we observe `jumps'; however, such generality is typically not required as many applications are when $\CE\subset \bR^n$ or $\CE\subset \bR^\bN.$

Write
$$
\tilde\Sigma_p =\Sigma_p\otimes \FE,
$$
where $\Sigma_p$ is the predictable $\sigma$-algebra, so $\tilde\Sigma_p$ is a $\sigma$-algebra on $\Omega\times[0,\infty]\times\CE$.

\begin{definition}\label{Def15.20}
A \emph{nonnegative random measure} $\mu$ is a family $\{\mu(\omega,\cdot),\, \omega  \in \Omega  \}$ of $\sigma$-finite measures on $\big([0,\infty]\times \CE,\CB([0,\infty])\otimes \FE\big).$ A function which can be written as the difference of two nonnegative random measures is called a random measure.
\end{definition}

Suppose $W:\tilde \Omega  \to\bR$ is a map such that each section $W(\omega ,\cdot):[0,\infty]\times \CE\to \bR$ is a Borel measurable function.  Then write
$$
(W*\mu)_t(\omega  ) =\int_{[0,t]\times \CE} W(\omega  ,s,x)\mu (\omega  ,ds\times dx),
$$
if this integral exists (possibly equal to $+\infty  $ or $-\infty$).

If $(W*\mu)_t$ exists for all $t\in [0,\infty[$ and is measurable with respect to $\omega$, one can talk of the {\it process} $W*\mu  .$ This process has jumps 
\[\Delta (W*\mu)_t = \int_{\CE} W(\omega, t, x)\mu(\omega, \{t\}\times dx).\]

Also, the random measure $W\cdot \mu$ is defined by
$$
(W\cdot \mu  )(\omega  ,dt\times dx) = W(\omega  ,t,x)\mu  (\omega ,dt\times dx).
$$
\begin{definition}\label{Def15.24}
The random measure $\mu  $ is \emph{optional} (resp.
\emph{predictable}) if for each positive optional (resp. predictable) process $W$ the processes $W*\mu  ^+$ and $W*\mu  ^-$ are
optional (resp. predictable).

The \emph{natural filtration of $\mu$} is the smallest right-continuous complete filtration in which $\mu$ is optional.
\end{definition}

Define the following sets of (equivalence classes of) random measures
\begin{itemize}
 \item  $\tilde{\CA}$ will denote the set of optional and integrable random measures.
\item $\tilde \CA_\sigma$ will denote the set of random measures $\mu$ for which there
exists a $\tilde \Sigma_p$ measurable partition $\{B_n\}_{n\in\bN}$ of $\tilde \Omega$ such that $I_{B_n}\cdot \mu  \in
\tilde\CA$ for each $n.$
\end{itemize}

\begin{theorem}
\index{Notation!$\mu_p$}
Suppose $\mu  \in \tilde\CA_\sigma  \,.$ There is then a unique
predictable random measure $\mu_p\in \tilde\CA_\sigma  $ such that,  for every $\tilde\Sigma_p$-measurable function $W$ such that
$W*\mu  \in \CA_{\loc}$ we have 
\[\uPi^*_p(W*\mu  ) = W*\mu_p,\]
where $\uPi^*_p$ denotes the dual predictable projection in $\CA_\loc$ (the processes of locally integrable variation). The measure $\mu_p$ is called the \emph{dual predictable projection} or \emph{compensator} of
$\mu.$ 
\end{theorem}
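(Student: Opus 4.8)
\emph{Proof proposal.} My plan is to build $\mu_p$ from three standard ingredients, each of which I take as known from the general theory: the dual predictable projection of \emph{processes}, with its linearity, positivity, and monotone-convergence properties on $\CA_\loc$; the Dol\'eans correspondence between finite measures on the predictable $\sigma$-algebra that charge no evanescent set and increasing predictable processes in $\CA$; and a disintegration theorem, which is the one place where I would use that $(\CE,\FE)$ is Blackwell. Two preliminary reductions come first. Since $\uPi^*_p$ is linear and the Jordan components $\mu^+,\mu^-$ of $\mu$ again lie in $\tilde\CA_\sigma$, I may assume $\mu\ge0$. And using the $\tilde\Sigma_p$-measurable partition $\{B_n\}$ witnessing $\mu\in\tilde\CA_\sigma$, I may assume $\mu\in\tilde\CA$: I would compensate each $\mu^{(n)}:=I_{B_n}\cdot\mu\in\tilde\CA$, note that the resulting $\mu_p^{(n)}$ is carried by $B_n$ off a predictable null set (since $\uPi^*_p((I_{B_{n'}}W)*\mu^{(n)})=\uPi^*_p(0)=0$ for $n'\neq n$ and all predictable $W$), and set $\mu_p:=\sum_n\mu_p^{(n)}$; verifying that this is again a $\sigma$-finite predictable random measure carrying the asserted identity is routine, via monotone convergence for $\uPi^*_p$ along increasing sequences.

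Now let $\mu\ge0$ lie in $\tilde\CA$. The central construction is this. Form the finite measure $M$ on $(\tilde\Omega,\tilde\Sigma_p)$ given by $M(W)=\bE[(W*\mu)_\infty]$ on nonnegative $\tilde\Sigma_p$-measurable $W$, and let $m_0$ be its pushforward to $(\Omega\times[0,\infty],\Sigma_p)$ along $(\omega,t,x)\mapsto(\omega,t)$. Then $m_0$ is finite on $\Sigma_p$ and charges no evanescent set, so the Dol\'eans correspondence produces an increasing predictable $B\in\CA$ with $\bE[\int_0^\infty H_s\,dB_s]=m_0(H)$ for all bounded predictable $H$; and because $(\CE,\FE)$ is Blackwell, $M$ disintegrates over $m_0$ as $M(d\omega\,dt\,dx)=\nu(\omega,t,dx)\,m_0(d\omega\,dt)$, where $\nu$ is a kernel with $(\omega,t)\mapsto\nu(\omega,t,\Gamma)$ predictable for each $\Gamma\in\FE$ and $\nu(\omega,t,\CE)=1$ $m_0$-a.e. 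I would then define $\mu_p(\omega,dt\,dx):=\nu(\omega,t,dx)\,dB_t(\omega)$, check that $\mu_p$ is a predictable random measure in $\tilde\CA$, and read off from the disintegration that $\bE[(W*\mu_p)_\infty]=M(W)=\bE[(W*\mu)_\infty]$ for every bounded predictable $W$. Applying this with $W$ replaced by $HW$ and using $((HW)*\lambda)_t=\int_0^t H_s\,d(W*\lambda)_s$ for $\lambda\in\{\mu,\mu_p\}$ gives $\bE[\int_0^\infty H_s\,d(W*\mu_p)_s]=\bE[\int_0^\infty H_s\,d(W*\mu)_s]$ for all bounded predictable $H$; since $W*\mu_p$ is predictable and of integrable variation, this is exactly the identity characterizing it as $\uPi^*_p(W*\mu)$. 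Finally I would pass from bounded $W$ to a general nonnegative $\tilde\Sigma_p$-measurable $W$ with $W*\mu\in\CA_\loc$ by monotone convergence ($W\wedge m\uparrow W$, using the monotone-convergence property of $\uPi^*_p$ on $\CA_\loc$), and then to signed such $W$ by splitting $W=W^+-W^-$.

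Uniqueness is the easy part: if $\mu_p'$ also satisfies the conclusion, then $W*\mu_p=\uPi^*_p(W*\mu)=W*\mu_p'$ for every bounded $\tilde\Sigma_p$-measurable $W$, in particular for $W=I_{(s,t]}\,I_A\,I_\Gamma$ with $A\in\CF_s$ and $\Gamma\in\FE$; such sets generate a $\pi$-system for $\tilde\Sigma_p$ on which $\mu_p$ and $\mu_p'$, both $\sigma$-finite, agree, so $\mu_p$ and $\mu_p'$ are indistinguishable. The step I expect to be the main obstacle is the disintegration producing a genuinely \emph{predictable} kernel $\nu$ --- equivalently, a measurable selection of regular conditional laws on $\CE$ that depends measurably on the predictable variable --- and it is precisely here that the Blackwell assumption on $(\CE,\FE)$ cannot be dispensed with; a lesser nuisance is the bookkeeping in gluing the pieces $\mu_p^{(n)}$ in the reduction to $\tilde\CA$. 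The background facts invoked above --- the Dol\'eans correspondence, monotone convergence for dual predictable projections, and the existence of regular disintegrations over Blackwell spaces --- are all available in \cite{Jacod2003} and \cite{Elliott1982}.
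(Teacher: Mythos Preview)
The paper does not prove this theorem: it is presented in Section~2, ``A quick review of random measures,'' explicitly as a result taken from \cite{Elliott1982} and \cite{Jacod2003} ``without proof.'' Your proposal is therefore not competing with any argument in the paper itself.

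That said, your sketch is correct and is essentially the standard construction one finds in those references: reduce to the nonnegative, integrable case via the Jordan decomposition and the $\tilde\Sigma_p$-partition; build the Dol\'eans measure $M(W)=\bE[(W*\mu)_\infty]$ on $\tilde\Sigma_p$; obtain the predictable increasing process $B$ from its marginal on $\Sigma_p$; disintegrate over the Blackwell space $(\CE,\FE)$ to get a predictable kernel $\nu$; set $\mu_p(d t\,dx)=\nu(\omega,t,dx)\,dB_t$; and verify the defining identity first for bounded predictable $W$, then extend by monotone convergence. Your identification of the disintegration step as the place where the Blackwell hypothesis is genuinely needed is accurate, and the uniqueness argument via a generating $\pi$-system is standard. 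There is nothing to correct.
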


\begin{definition}\label{Def15.40}
\index{Terminology!Integer valued random measure}
A random measure $\mu$ is said to be \emph{integer valued} if
\begin{enumerate}[(i)]
\item  $\mu  (\omega  ,\{t\}\times \CE)\le 1$ for all $(\omega  ,t),$
\item  for every $A\in \CB([0,\infty])\otimes \FE,$ the random variable $\mu(\omega, A)$ takes
values in the set $\overline{\mathbb{Z}}^+= \{0,1,2,\dots\}\cup \{\infty  \}.$ 
\end{enumerate}
 Note that $\mu$ is then a nonnegative random measure. We denote by $\tilde \CA^1_\sigma$ the integer valued elements of $\tilde\CA_\sigma$.
\end{definition}

\begin{definition}
 For $\mu\in \tilde\CA^1_\sigma$, let $\tilde \mu = \mu-\mu_p$. Then $\tilde\mu$ is a \emph{martingale random measure}.

We say $W$ is stochastically integrable with respect to $\tilde \mu$ if it is $\tilde\Sigma_p$-measurable and
\[\Big(\sum_{t\leq (\cdot)} \Big(\int_{\CE} W(\omega, t,z) \tilde\mu(\omega, \{t\}\times dz)\Big)^2\Big)^{1/2} \in \CA_{\loc}.\]
 In this case, the stochastic integral of $W$ with respect to $\tilde \mu$ is (uniquely) defined to be the purely discontinuous local martingale $X$ such that 
\[\Delta X_t(\omega) = \Delta (W*\tilde\mu)_t = \int_{\CE} W(\omega, t,z) \tilde\mu(\omega, \{t\}\times dz)\]
up to indistinguishability.
\end{definition}

\begin{theorem}\label{thm:QVrandommeasure}
 For $\mu\in \tilde\CA^1_\sigma$, let $\tilde \mu = \mu-\mu_p$. We define the optional and predictable quadratic variation measures
\[\begin{split}
[ \tilde \mu](\omega, [0,t]\times B) = \sum_{s\in[0,t]}\big(\mu(\omega,  \{s\}\times B)-\mu_p(\omega,  \{s\}\times B)\big)^2,\\
\la \tilde \mu\ra(\omega, [0,t]\times B) = \mu_p(\omega, [0,t]\times B) - \sum_{s\in[0,t]} \big(\mu_p(\omega, \{s\}\times B)\big)^2.\end{split}
\]
Then $\la \tilde\mu\ra$ is a predictable nonnegative random measure, and for any  $W$ stochastically integrable with respect to $\tilde\mu$ we have
\[[ W*\tilde \mu] = W^2* [ \tilde \mu] \q\text{ and }\q\la W*\tilde \mu\ra = W^2* \la \tilde \mu\ra.\]
\end{theorem}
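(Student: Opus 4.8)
The plan is to derive both identities from the structure of $X:=W*\tilde\mu$ as a \emph{purely discontinuous} local martingale. Since $X$ is purely discontinuous (this is how the stochastic integral was defined), its optional quadratic variation carries no continuous part and is simply $[X]_t=\sum_{s\le t}(\Delta X_s)^2$, while $\la X\ra$ is by definition the dual predictable projection $\uPi_p^*([X])$. So it suffices to (i) show $\sum_{s\le t}(\Delta X_s)^2=W^2*[\tilde\mu]$, and then (ii) apply $\uPi_p^*$ and identify the result with $W^2*\la\tilde\mu\ra$.

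For (i), write $\Delta X_s=\int_\CE W\,\mu(\{s\}\times dz)-\int_\CE W\,\mu_p(\{s\}\times dz)$. The essential use of integer-valuedness is that, because $\mu(\omega,\{s\}\times\CE)\le 1$, the section $\mu(\omega,\{s\}\times\cdot)$ is either $0$ or a single unit point mass, so that $\big(\int_\CE W\,\mu(\{s\}\times dz)\big)^2=\int_\CE W^2\,\mu(\{s\}\times dz)$ --- the square ``collapses through the atom''. Expanding $(\Delta X_s)^2$ into a $\mu$-term, a cross term and a $\mu_p$-atom term, summing over $s\le t$, and matching against the expansion of $(\mu(\{s\}\times B)-\mu_p(\{s\}\times B))^2$ in the formula defining $[\tilde\mu]$, one reads off $[W*\tilde\mu]=W^2*[\tilde\mu]$ jump by jump. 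Part of this step is the verification that $[\tilde\mu]$ is a genuine $\sigma$-finite optional random measure against which $W^2$ may be integrated; when $\mu_p$ is quasi-left-continuous — as it is for parochial random measures and for totally inaccessible well-ordered jumps — this is automatic, since then $[\tilde\mu]=\mu$.

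For (ii), apply $\uPi_p^*$ to $[W*\tilde\mu]=W^2*[\tilde\mu]$. Since $W$ is stochastically integrable with respect to $\tilde\mu$, $[W*\tilde\mu]\in\CA_\loc$ (this is the content of the local square-integrability encoded in that hypothesis), so $\la W*\tilde\mu\ra=\uPi_p^*([W*\tilde\mu])$ is defined; and the compensator theorem above lets us commute $\uPi_p^*$ with the operation $W^2*(\cdot)$, giving $\la W*\tilde\mu\ra=W^2*([\tilde\mu])_p$. It then remains to check $([\tilde\mu])_p=\la\tilde\mu\ra$: decompose $\mu_p$ into its time-continuous and time-atomic parts; the $\mu$-term of $[\tilde\mu]$ has compensator $\mu_p$, the time-continuous part contributes nothing further, the term $\sum_s(\mu_p(\{s\}\times B))^2$ is already predictable, and the cross term $-2\sum_s\mu(\{s\}\times B)\,\mu_p(\{s\}\times B)$ — a sum driven by $\mu$ with $\CF_{s-}$-measurable coefficients — has compensator $-2\sum_s(\mu_p(\{s\}\times B))^2$; adding, $([\tilde\mu])_p([0,t]\times B)=\mu_p([0,t]\times B)-\sum_s(\mu_p(\{s\}\times B))^2=\la\tilde\mu\ra([0,t]\times B)$. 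Finally, $\la\tilde\mu\ra$ is predictable as it is extracted from the predictable measure $\mu_p$ by a predictable atom operation, and it is nonnegative because $\mu_p(\{s\}\times\CE)\le1$ (a standard property of compensators of integer-valued random measures), whence $\sum_s(\mu_p(\{s\}\times B))^2\le\sum_s\mu_p(\{s\}\times B)\le\mu_p([0,t]\times B)$.

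I expect the main obstacle to be the measure-theoretic bookkeeping around the atoms of $\mu_p$: checking that $[\tilde\mu]$ and $\la\tilde\mu\ra$ are genuinely $\sigma$-finite random measures of the stated measurability, that the jump-by-jump matching in (i) and the computation of $([\tilde\mu])_p$ in (ii) are legitimate as manipulations of random measures, and that the stochastic-integrability hypothesis on $W$ indeed places $[W*\tilde\mu]$ in $\CA_\loc$ so that the dual predictable projection and the compensator theorem may be applied.
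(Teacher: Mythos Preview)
The paper does not prove this theorem: it appears in the review section, which the author explicitly prefaces with ``These can be found in \cite{Elliott1982} or \cite{Jacod2003}, and are presented without proof.'' So there is no proof in the paper to compare against.

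Your sketch follows the standard route one finds in those references: compute $[X]$ jump by jump using that $X=W*\tilde\mu$ is purely discontinuous and that $\mu(\{s\}\times\cdot)$ is either zero or a Dirac mass, then take the dual predictable projection and identify it with $\la\tilde\mu\ra$. That is the right idea, and your term-by-term computation of $([\tilde\mu])_p$ is correct.

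One point you flag deserves emphasis rather than hedging: the set functions $B\mapsto[\tilde\mu]([0,t]\times B)$ and $B\mapsto\la\tilde\mu\ra([0,t]\times B)$ are \emph{not} countably additive in $B$ when $\mu_p$ has time atoms (take disjoint $B_1,B_2$ and expand the square). So the identities $[W*\tilde\mu]=W^2*[\tilde\mu]$ and $\la W*\tilde\mu\ra=W^2*\la\tilde\mu\ra$ cannot literally be read as integration of $W^2$ against a random measure in the sense of Definition~\ref{Def15.20}; they are shorthand for the process-level identities you actually prove, or else must be interpreted via the more elaborate ``second-moment'' formulae in Jacod--Shiryaev (II.1.33--34), which involve $\hat W_s=\int W(s,x)\mu_p(\{s\}\times dx)$ explicitly. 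Your proof is fine once this is understood; the issue is with the statement's packaging, not with your argument.
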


\section{The single jump measure}

A particularly straightforward example of a random measure is given by a process with a single jump. This basic case forms the starting point for our approach.

\begin{definition}
The single-jump measure $\mu$ is defined by a pair $(T,Z)$, where $T\in[0,\infty]$, $Z\in\CE$ and
\[\mu(\omega, [0,t]\times A)=I_{\{T(\omega)\leq t\}}I_{Z(\omega)\in A}.\]
\end{definition}

We consider the filtration $\{\F_t\}_{t\geq 0}$, where $\F_0$ is arbitrary and 
\[\F_t =\sigma(I_{T\leq t}, ZI_{T\leq t}) \vee \F_0.\] We can associate the probability measure $\bP$ with a deterministic measure $\nu$ (or more generally, an $\F_0$-measurable random measure $\nu$) on $[0,\infty]\times A$, defined by 
\[\nu(\omega, [t,\infty  ]\times A) = \bP(T\geq t, Z\in A |\F_0).\]
For $A\in \FE$ write
$$
F^A_t = \nu(\omega, [t,\infty  ]\times A),
$$
so that $F^A_t$ is the probability (given $\F_0$) that $T>t$ and $Z\in A.$ 
Furthermore, write $F_t = F^\CE_t$ and $c=\inf\{t:F_t=0\}.$

\begin{lemma}\label{Lem15.2}
Suppose $\tau  $ is an $(\CF_t)$ stopping time. Then there is a $\F_0$-measurable 
$t_0\in [0,\infty]$ such that $\tau  \wedge T = t_0 \wedge T\;\as.$
\end{lemma}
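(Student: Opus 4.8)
The plan is to exploit the very simple structure of the filtration $\{\CF_t\}$ before the jump time $T$: on the event $\{T > t\}$, the $\sigma$-algebra $\CF_t$ contains no information beyond $\CF_0$ (since $I_{T\le s} = 0$ and $ZI_{T\le s}=0$ for all $s\le t$ there). Concretely, I would first establish the auxiliary fact that for each $t$, the trace $\sigma$-algebra $\CF_t \cap \{T>t\}$ coincides with $\CF_0 \cap \{T>t\}$; this is essentially immediate from the definition $\CF_t = \sigma(I_{T\le s}, ZI_{T\le s} : s\le t)\vee \CF_0$, because every generator, restricted to $\{T>t\}$, is either identically zero or $\CF_0$-measurable. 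Hence any $\CF_t$-measurable set, intersected with $\{T>t\}$, is of the form $A\cap\{T>t\}$ with $A\in\CF_0$.

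Next I would apply this to the stopping time $\tau$. Define $t_0 = \inf\{t \ge 0 : \{\tau \le t\}\cap\{T>t\} \text{ is (a.s.) equal to } A_t \cap \{T>t\} \text{ with } A_t\in\CF_0, \bP(A_t\mid\CF_0)>0\}$ — more cleanly, observe that on $\{\tau < T\}$ the stopping time $\tau$ must be $\CF_0$-measurable. Indeed, for rational $q$, the set $\{\tau \le q\}\cap\{T>q\} = A_q\cap\{T>q\}$ for some $A_q\in\CF_0$; intersecting further down to $\{T>\tau\}$ and using right-continuity of $t\mapsto I_{\tau\le t}$ shows that $\tau\mathbf{1}_{\{\tau<T\}}$ agrees a.s. with an $\CF_0$-measurable random variable. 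Call this variable (suitably completed on $\{\tau\ge T\}$, e.g. by setting it to $+\infty$ there) $t_0$. Then on $\{\tau < T\}$ we have $\tau\wedge T = \tau = t_0 = t_0\wedge T$ (using $t_0 < T$ there), and on $\{\tau\ge T\}$ we have $\tau\wedge T = T$, while $t_0\wedge T = T$ as well since $t_0 = \infty \ge T$. So $\tau\wedge T = t_0\wedge T$ a.s.

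The one point requiring a little care — and the main obstacle — is the behaviour exactly at the jump, i.e. distinguishing $\{\tau < T\}$ from $\{\tau = T\}$ and handling the possibility that $t_0$ as defined from the strict-inequality sets does not line up with $\tau$ on $\{\tau = T\}$. The resolution is that on $\{\tau = T\}$ we do not need $t_0 = \tau$; we only need $t_0\wedge T = T$, which holds as long as we have chosen $t_0 \ge T$ there (or $t_0$ arbitrary $\ge$ the value forced on $\{\tau<T\}$), and this is consistent because the $\CF_0$-measurable version built from $\{\tau\le q\}\cap\{T>q\}$ is naturally $\ge$ any such threshold. A clean way to package this is: set $t_0 = \mathrm{ess\,inf}\{q\in\mathbb{Q}_+ : \bP(\tau\le q,\ T>q\mid\CF_0) = \bP(\tau\le q\mid\{T>q\},\CF_0)\cdot\bP(T>q\mid\CF_0),\ \text{and this event has the form } A_q\cap\{T>q\}\}$, but in practice one just verifies directly that the $\CF_0$-measurable random variable $t_0(\omega) := \inf\{t : \omega\in A_t^c\}$, with $A_t$ the $\CF_0$-sets above, does the job. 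I would then check the two cases $\{\tau<T\}$ and $\{\tau\ge T\}$ separately as above to conclude $\tau\wedge T = t_0\wedge T$ a.s., and note $t_0\in[0,\infty]$ is $\CF_0$-measurable by construction.
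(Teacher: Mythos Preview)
The paper states this lemma without proof (it is a classical fact about single-jump filtrations, going back to Dellacherie and to Chou--Meyer), so there is no argument in the paper to compare against. Your overall strategy is the standard one and is correct in spirit: the trace identity $\CF_t\cap\{T>t\}=\CF_0\cap\{T>t\}$ is exactly the right starting point, and building $t_0$ from the $\CF_0$-sets $A_q$ with $\{\tau\le q\}\cap\{T>q\}=A_q\cap\{T>q\}$ is how the proof goes.

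That said, your construction of $t_0$ contains a genuine slip. You propose to take $t_0$ equal to an $\CF_0$-measurable version of $\tau$ on $\{\tau<T\}$ and then ``set it to $+\infty$'' on $\{\tau\ge T\}$; but $\{\tau\ge T\}$ is generally not $\CF_0$-measurable, so this patching does not yield an $\CF_0$-measurable random variable. The fix is to define $t_0$ \emph{globally} from the $A_q$ alone: after arranging the $A_q$ to be increasing in $q$ (replace $A_q$ by $\bigcup_{r\le q,\,r\in\mathbb{Q}}A_r$), set $t_0(\omega)=\inf\{q\in\mathbb{Q}_+:\omega\in A_q\}$ (your formula with $A_t^c$ is inverted). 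One then checks the two cases directly: on $\{\tau\ge T\}$, for every rational $q<T(\omega)$ one has $\omega\in\{T>q\}$ and $\tau(\omega)>q$, hence $\omega\notin A_q$, giving $t_0(\omega)\ge T(\omega)$ and thus $t_0\wedge T=T=\tau\wedge T$; on $\{\tau<T\}$, the same reasoning for rationals on either side of $\tau(\omega)$ gives $t_0=\tau<T$. This closes the gap you flagged as ``the main obstacle'' without any appeal to the non-$\CF_0$-measurable decomposition.
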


In this context, we have a simple version of a martingale representation theorem, due to Chou and Meyer \cite{Chou1975} (see also Davis \cite{Davis1976} and Elliott \cite{Elliott1976, Elliott1977}). We present this argument as it will form the basis for our approach.

\begin{theorem}[Chou and Meyer \cite{Chou1975}]\label{Thm15.14}
An adapted process $M$ is a $\{\CF_t\}_{t\geq0}$-local martingale if and
only if there exists some $g$ such that 
\[M_t =M_0+\int_{]0,t]\times\CE}  g(s,x)\tilde\mu(ds,dx).\]
Furthermore, this function $g$ satisfies
\[E[|g(T, z)|I_{\{T<t\}}]<\infty\quad \text{for all $t$ with $F_t>0$},\]
and is unique up to equality $d\la \tilde\mu\ra\times d\bP$-a.e.
\end{theorem}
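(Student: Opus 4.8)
The plan is to prove the two implications separately, with the ``only if'' direction being the substantive one. For the ``if'' direction, suppose $M_t = M_0 + \int_{]0,t]\times\CE} g(s,x)\tilde\mu(ds,dx)$ with $g$ satisfying the stated integrability. Then $M - M_0$ is by construction a stochastic integral against the martingale random measure $\tilde\mu$, hence a local martingale; the only thing to check is that the integral is well-defined, i.e.\ that $g$ is stochastically integrable with respect to $\tilde\mu$ in the sense of the definition preceding Theorem \ref{thm:QVrandommeasure}. Since $\mu$ has a single jump $(T,Z)$, the sum $\sum_{t\le(\cdot)}(\int_\CE g\,\tilde\mu(\{t\}\times dz))^2$ reduces (modulo the compensator term, which is absolutely continuous in $t$ off $\{T\}$) to essentially $g(T,Z)^2 I_{\{T\le(\cdot)\}}$ plus a predictable drift, and the hypothesis $\bE[|g(T,Z)|I_{\{T<t\}}]<\infty$ for $F_t>0$ gives local integrability up to the (possibly inaccessible) time $c$; one handles the behaviour at $c$ using $F_c = 0$. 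So this direction is a routine verification.

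For the ``only if'' direction, let $M$ be a local martingale. The key structural fact is Lemma \ref{Lem15.2}: on the stochastic interval $[\![0,T[\![$ the filtration is essentially generated by the deterministic clock together with $\F_0$, so any $\F_t$-measurable random variable agrees, on $\{t<T\}$, with an $\F_0$-measurable function of $t$. I would first reduce to a true martingale by localisation, and even to the case $M_0 = 0$ by absorbing $M_0$ into $\F_0$. Then I would use Lemma \ref{Lem15.2} applied to the martingale property: for $t < T$ the value $M_t$ must be an $\F_0$-measurable deterministic function $h(t)$, and the martingale equation $\bE[M_{t\wedge T}\mid\F_s] = M_{s\wedge T}$ for $s<t$ translates, via the conditional law $\nu$ of $(T,Z)$ given $\F_0$ and the survival function $F$, into an integral equation for $h$ together with the jump contribution at $T$. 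Solving this gives an explicit candidate:
\[
h(t) = \frac{1}{F_t}\int_{]t,\infty]\times\CE}\!\! M_{\text{jump value}}\,\nu(ds,dx) \;-\;(\text{correction}),
\]
and one reads off $g(s,x) = (\text{value of }M\text{ at the jump }(s,x)) - h(s-)$, i.e.\ $g(T,Z) = \Delta M_T$ on $\{T\le t\}$ and $g$ is chosen on the complement so that $g*\tilde\mu$ has the right drift. Concretely I expect $g(s,x) = M_s^{(s,x)} - \big(M_{s-} \text{ on } \{T\ge s\}\big)$ where $M^{(s,x)}$ denotes $M$ evaluated on the event $\{T=s, Z=x\}$; the construction of $g$ off the support of $\mu_p$ is where $F_t$ appearing in the denominator and the condition $F_t>0$ enters.

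Then I would verify that with this $g$ the process $M_0 + g*\tilde\mu$ is indistinguishable from $M$: both are local martingales, both have the same jump at $T$ (namely $\Delta M_T = g(T,Z)$), and on $[\![0,T[\![$ both equal the deterministic $h$; since a local martingale on this single-jump filtration is determined by its behaviour before $T$ and its jump, they coincide. The integrability statement $\bE[|g(T,z)|I_{\{T<t\}}]<\infty$ for $F_t>0$ follows from the fact that $M$ is a genuine martingale after localisation and $g(T,Z) = \Delta M_T$, or directly from the formula for $h$. Uniqueness up to $d\la\tilde\mu\ra\times d\bP$-a.e.: if $g*\tilde\mu = g'*\tilde\mu$ then $(g-g')*\tilde\mu \equiv 0$, so by Theorem \ref{thm:QVrandommeasure} we get $(g-g')^2*\la\tilde\mu\ra \equiv 0$, which forces $g=g'$ off any $\la\tilde\mu\ra\times\bP$-null set. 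The main obstacle I anticipate is the careful bookkeeping at and beyond the terminal time $c=\inf\{t:F_t=0\}$ — in particular whether $T$ can equal $c$ with positive probability, whether $c$ is a jump time of $\mu_p$ (an ``accessible'' explosion), and making sure the candidate $g$ is $\tilde\Sigma_p$-measurable and stochastically integrable all the way up to $c$; the division by $F_t$ is harmless for $t<c$ but the limit $t\uparrow c$ needs the integrability hypothesis to control it.
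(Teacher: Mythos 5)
Your overall strategy is the same as the paper's: localise to a uniformly integrable martingale, reduce to $M_0=0$, use the Doob--Dynkin lemma (equivalently Lemma \ref{Lem15.2}) to write $M_\infty=h(T,Z)$ for an $\F_0$-measurable $h$, note that on $\{t<T\}$ one has $M_t=-F_t^{-1}\int_{]0,t]\times\CE}h\,d\nu$, read off an explicit integrand, and prove uniqueness via the quadratic variation identity of Theorem \ref{thm:QVrandommeasure}. However, there is a concrete error at the central step: the identification $g(T,Z)=\Delta M_T$, and the resulting candidate $g(s,x)=M^{(s,x)}_s-M_{s-}$, are only correct when $\nu$ has no atoms in time. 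In general the jump of the stochastic integral at $T$ is
\[
\Delta(g*\tilde\mu)_T=g(T,Z)-\int_\CE g(T,x)\,\mu_p(\omega,\{T\}\times dx),
\]
and the compensator of the single-jump measure charges $\{T\}$ exactly where $F$ has an atom. Since the whole point of this bare-hands approach (and the stated theorem) is that no density assumption is made on $T$, this case cannot be excluded. The paper's candidate is
\[
g(t,x)=h(t,x)+\frac{1}{F_t}\int_{]0,t]\times\CE}h(s,x')\,d\nu(s,x'),
\]
with $F_t$ rather than $F_{t-}$ and the closed interval $]0,t]$; your candidate $h(t,x)+F_{t-}^{-1}\int_{]0,t[\times\CE}h\,d\nu$ agrees with it only off the atoms of $F$. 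Your verification that $M$ and $M_0+g*\tilde\mu$ coincide rests on ``both have the same jump at $T$,'' which fails for your $g$ precisely in the atomic case.

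The second gap is that the ``integral equation for $h$'' is never actually solved: the substance of the paper's proof is the Fubini/integration-by-parts computation showing
\[
\int_{]0,t]\times\CE}g(s,x)\frac{1}{F_{s-}}\,d\nu=\frac{1}{F_t}\int_{]0,t]\times\CE}h\,d\nu,
\]
which is exactly what makes $g*\tilde\mu$ reproduce the pre-jump part of $M$, and it is also where the $F_t$ versus $F_{s-}$ bookkeeping is forced on you. Once that computation is done, the integrability claim does follow ``directly from the formula,'' as you suggest, via the estimate $\int_{]0,t]\times\CE}|g|\,d\nu\le(1+F_t^{-1})\int_{]0,t]\times\CE}|h|\,d\nu=(1+F_t^{-1})\bE[|M_T|I_{\{T\le t\}}\mid\F_0]$; and your uniqueness argument is exactly the paper's.
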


\begin{proof}
Suppose $E[|g(T, z)|I_{T<t}]<\infty$ for all $t$ with $F_t>0$. Then straightforward calculations show that $M_0+(g*\tilde\mu)$ is a local $\{\CF_t\}$-martingale. 

To show the converse, by localization we can suppose that $M$ is a uniformly integrable martingale. Without loss of generality, assume $M_0=0$. The Doob--Dynkin lemma implies that there exists an $\F_0$-measurable function $h$ such that $M_\infty = h(T,z)$, and hence
\[
M_t = h(T,z)I_{\{t\ge T\}}- I_{\{t<T\}} \frac{1}{F_t} \int_{]0,t]\times \CE}h(s,x)d\nu (s,x),
\]
Now, for any sufficiently integrable $g$,
\[
(g*\tilde\mu)_t=\int_{]0,t]\times\CE}g\,d\tilde\mu= I_{\{t\ge T\}} g(T,z) - \int_{]0,t\wedge T]\times \CE} g(s,x) \frac{1}{F_{s-}}d\nu.
\]
Define,
\[
g(t,x) = h(t,x) + \frac{1}{F_t} \,\int_{]0,t]\times \CE} h(s,x')d\nu(s,x'),
\]
and Fubini's theorem yields
\[\begin{split}
&\int_{]0,t]\times \CE} g(s,x)\frac{1}{F_{s-}}d\nu  \\
&= \int_{]0,t]\times \CE}  h(s,x)\frac{1}{F_{s-}}d\nu  (s,x) -\int_{]0,t]} \Big(\frac{1}{F_sF_{s-}} \int_{]0,s]\times \CE}h(u,x)\nu  (du,dx)\Big)dF_s\\
&= \int_{]0,t]\times \CE} \frac{h}{F_{s-}}d\nu + \int_{]0,t]\times \CE} \Big(-\int_{[u,t]} \frac{1}{F_sF_{s-}}dF_s\Big) h(u,x)d\nu\\
&= \int_{]0,t]\times \CE} \frac{h}{F_{s-}}d\nu  +\int_{]0,t]\times E}\Big(\,\frac{1}{F_t}\,-\,\frac{1}{F_{u-}}\,\Big) h(u,x)d\nu\\
&= \,\frac {1}{F_t}\,\int_{]0,t]\times \CE} h\,d\nu .
\end{split}
\]
Therefore $M=M_0+(g*\tilde\mu)$, as desired. 

It remains to show that $g$ has the desired integrability and is unique. To do this, we note that if $E[|M_t|]<\infty$ then our construction gives
$$
\begin{aligned}
\int_{]0,t]\times\CE}   \vert  g\vert  d\nu
&\le \int_{]0,t]\times\CE} \vert  h\vert  d\nu  - \int_{]0,t]}\,\Big(\frac{1}{F_s}\,\int_{]0,s]\times E} \vert  h\vert  d\nu\Big) dF_s\\
&\le \int_{]0,t]\times\CE} \vert  h\vert  d\mu   - F^{-1}_{t}\,\int_{]0,t]}\,\int_{]0,s]\times E} \vert  h\vert  d\nu dF_s\\
&= \int_{]0,t]\times\CE} \vert  h\vert  d\mu  + F^{-1}_{t}\,\int_{]0,t]\times E} (F_s - F_{t}) \vert  h\vert  d\nu  \\
&\le \Big(1+\frac{1}{F_{t}}\Big)\int_{]0,t]\times\CE}\vert  h\vert  d\nu\\
&=\Big(1+\frac{1}{F_{t}}\Big)E[|M_T| I_{\{T\leq t\}}|\F_0]\\
\end{aligned}
$$
As $E[E[|M_T| I_{\{T\leq t\}}|\F_0]] \leq E[|M_t|] <\infty$, this implies that $E[|g(T,x)|I_{\{T<t\}}]<\infty$ for each $t$ such that $F_t>0$.

To see uniqueness, we see that if 
\[M_t = M_0 + \int_{]0,t]} g \,d\tilde\mu = M_0 + \int_{]0,t]} g' \,d\tilde\mu\]
then 
\[\int_{]0,t]} (g-g') d\tilde\mu = 0\text{ and so }E\Big[\int_{]0,t]} (g-g')^2 d\la\tilde\mu\ra\Big] = 0.\]
\end{proof}

\section{Well-ordered jump case}
Now suppose that $\mu$ is an integer valued random measure. Let $\{\F_t\}_{t\geq 0}$ be the natural filtration of $\mu$. Write $\tilde\mu=\mu-\mu_p$ for the martingale random measure generated by $\mu$.  Suppose that the jumps of $\mu$ are well ordered, that is, for any stopping time $S$  there exists a stopping time $T>S$ such that 
\[\mu(\omega, \rrbracket S, T\llbracket\times\CE)=0,\qquad  \mu(\omega, \llbracket T\rrbracket\times\CE)=1.\]

In order to prove the martingale representation theorem in this setting, we follow Elliott \cite{Elliott1976} in using the following version of the principle of transfinite induction. 

We denote by $\gimel$ the first uncountable ordinal (this is conventionally called $\Omega$ or $\omega_1$, however this leads to confusion in our probabilistic setting). The set $\gimel$ is well ordered and uncountable, and for every $\alpha\in\gimel$ the set $\alpha\leq \beta$ is countable.  The smallest element of $\gimel$ is denoted $0$, the successor to $\alpha\in\gimel$ is denoted $\alpha+1$, and $\alpha$ is called the predecessor of $\alpha+1$. If $\alpha\in\gimel$ does not have a predecessor it is called a limit ordinal, other elements are non-limit ordinals. The following theorem gives the properties of $\gimel$ we will use.

\begin{theorem}[Principle of transfinite induction]
The following can be found in Dellacherie and Meyer \cite[Vol I, Chapter 0]{Dellacherie1975}.
\begin{itemize}
 \item If $f$ is a monotonic increasing function from $\gimel$ into $\extreal$, then there is an ordinal $\alpha\in\gimel$ such that $f(\beta)=f(\gamma)$ for every $\beta\geq \alpha$.
 \item Suppose $P(\alpha)$ is some property of the ordinal $\alpha$, and suppose that
\begin{enumerate}[(i)]
 \item if $P(\alpha)$ is true then $P(\alpha+1)$ is true,
 \item if $\beta$ is a limit ordinal and if $P(\alpha)$ is true for all $\alpha<\beta$ then $P(\beta)$ is true, and
\item $P(0)$ is true.
\end{enumerate}
Then $P(\alpha)$ is true for all $\alpha\in\gimel$.
\end{itemize}
\end{theorem}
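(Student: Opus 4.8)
The plan is to derive both assertions from nothing more than the order‑theoretic facts about $\gimel$ recalled just above: $\gimel$ is well ordered, uncountable, and $\{\beta:\beta\le\alpha\}$ is countable for every $\alpha\in\gimel$. The only non‑formal ingredient I need is that $\gimel$ has \emph{uncountable cofinality}: for any sequence $(\alpha_n)_{n\in\bN}$ in $\gimel$ one has $\sup_n\alpha_n\in\gimel$. I would record this as a one‑line preliminary: $\bigcup_n\{\beta:\beta\le\alpha_n\}$ is a countable union of countable sets, hence countable, hence a proper subset of the uncountable set $\gimel$; so the $\alpha_n$ have an upper bound in $\gimel$, and the least such bound is $\sup_n\alpha_n$.

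For the second bullet (transfinite induction itself) I would argue by minimal counterexample. Suppose $P$ fails somewhere and let $\beta$ be the least ordinal with $\neg P(\beta)$, which exists since $\gimel$ is well ordered. By (iii), $\beta\neq0$. If $\beta$ has a predecessor $\alpha$, then $P(\alpha)$ holds by minimality, so (i) forces $P(\beta)$ — a contradiction. If $\beta$ is a limit ordinal, then $P(\alpha)$ holds for every $\alpha<\beta$ by minimality, so (ii) forces $P(\beta)$ — again a contradiction. Hence $P(\alpha)$ holds for all $\alpha\in\gimel$.

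For the first bullet, let $f:\gimel\to\extreal$ be monotone increasing and set $L=\sup_{\beta\in\gimel}f(\beta)\in\extreal$. It suffices to show the supremum is attained, say $f(\alpha)=L$: then monotonicity gives $f(\gamma)=L$ for every $\gamma\ge\alpha$, which is the stated eventual constancy. If $L=-\infty$ then $f\equiv-\infty$ and $\alpha=0$ works. Otherwise choose reals $L_n\uparrow L$ with $L_n<L$ for all $n$ (take $L_n=L-1/n$ if $L$ is finite, $L_n=n$ if $L=+\infty$). By definition of the supremum, for each $n$ there is $\alpha_n\in\gimel$ with $f(\alpha_n)>L_n$; put $\alpha=\sup_n\alpha_n$, which lies in $\gimel$ by the cofinality remark. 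Then $f(\alpha)\ge f(\alpha_n)>L_n$ for every $n$ by monotonicity, so $f(\alpha)\ge L$, i.e.\ $f(\alpha)=L$, as required.

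The hard part is really just the cofinality observation: the first bullet is genuinely false for well‑ordered sets of countable cofinality (for instance $\omega$, or $\omega_1+\omega$), so any proof must exploit precisely the property that every countable subset of $\gimel$ is bounded. Once that is in hand, both statements are routine bookkeeping with the well‑ordering; I would present the cofinality fact first and then invoke it in the attainment step of the first bullet.
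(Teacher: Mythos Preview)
Your argument is correct. Note, however, that the paper does not actually prove this theorem: it simply records the two statements and refers the reader to Dellacherie and Meyer \cite[Vol~I, Chapter~0]{Dellacherie1975}. So there is no ``paper's own proof'' to compare against; you have supplied a self-contained proof where the paper gives only a citation. Your isolation of the uncountable cofinality of $\gimel$ as the one nontrivial ingredient, followed by the minimal-counterexample argument for the induction principle and the supremum-attainment argument for eventual constancy, is the standard route and is essentially what one finds in the cited reference.
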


We first show the following lemma.
\begin{lemma}\label{lem:transfiniteinductionlemma1}
For a random measure in $\tilde\CA^1_\sigma$ with well ordered jumps, we can write the jumps of $\mu$ as a family of stopping times $\{T_\alpha\}_{\alpha\in\gimel}$ such that $\mu$ is supported on $\cup_{\alpha\in\gimel}\llbracket T_\alpha\rrbracket$. These stopping times satisfy $T_\alpha<T_\beta$ on the set $\{T_{\alpha}<\infty\}$ for any $\alpha<\beta$ in $\gimel$, and for each $\omega$, there exists an $\alpha\in\gimel$ such that $T_\alpha(\omega)=\infty$.
\end{lemma}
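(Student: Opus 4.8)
The plan is to build the family $\{T_\alpha\}_{\alpha\in\gimel}$ by transfinite recursion, reading off the jumps of $\mu$ in increasing time order. I would start with $T_0=0$ (so that a jump at time $0$, if there is one, is already captured by $\llbracket T_0\rrbracket$). At a successor ordinal, given the stopping time $T_\alpha$, I would take $T_{\alpha+1}$ to be a stopping time supplied by the well-ordered-jumps hypothesis applied with $S=T_\alpha$: thus $T_{\alpha+1}\ge T_\alpha$, with $T_{\alpha+1}>T_\alpha$ on $\{T_\alpha<\infty\}$ (reading $T_{\alpha+1}=\infty$ when $\mu$ has no jump strictly after $T_\alpha$), with $\mu(\omega,\rrbracket T_\alpha,T_{\alpha+1}\llbracket\times\CE)=0$, and with a jump of $\mu$ at $T_{\alpha+1}$ on $\{T_{\alpha+1}<\infty\}$. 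At a limit ordinal $\beta$ I would set $T_\beta=\sup_{\alpha<\beta}T_\alpha$; since $\beta<\gimel$ is countable this is a supremum of countably many stopping times, and $\{T_\beta\le t\}=\bigcap_{\alpha<\beta}\{T_\alpha\le t\}\in\CF_t$, so $T_\beta$ is again a stopping time. Here the choice of index set is doing real work: $\gimel$ is closed under successors and under countable suprema, so the recursion never leaves $\gimel$ and every $T_\alpha$, $\alpha\in\gimel$, is well defined.

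Having constructed the $T_\alpha$, I would verify the three asserted properties, each by transfinite induction using the Principle stated above. For \emph{monotonicity}, $T_\alpha\le T_\beta$ whenever $\alpha\le\beta$, with strict inequality on $\{T_\alpha<\infty\}$, follows from combining the strict increase at successor stages with $T_\beta=\sup_{\gamma<\beta}T_\gamma\ge T_{\alpha+1}>T_\alpha$ on $\{T_\alpha<\infty\}$ at limit stages. For the \emph{termination} clause, I would fix $\omega$ and note that $\alpha\mapsto T_\alpha(\omega)$ is monotone increasing from $\gimel$ into $\extreal$, so by the first clause of the Principle of Transfinite Induction there is an $\alpha_0$ with $T_\beta(\omega)=T_{\alpha_0}(\omega)=:\ell$ for all $\beta\ge\alpha_0$; taking $\beta=\alpha_0+1$ and using the strict increase of the successor step forces $\ell=\infty$, which is exactly the claim that some $T_\alpha(\omega)=\infty$. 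For the \emph{support} property, I would fix $\omega$, take a time $s$ with $\mu(\omega,\{s\}\times\CE)=1$, and suppose for contradiction that $s\ne T_\alpha(\omega)$ for all $\alpha$. Since $T_\alpha(\omega)\to\infty$ and $T_0(\omega)=0\le s$, the set $\{\alpha:T_\alpha(\omega)>s\}$ is a nonempty subset of the well-ordered set $\gimel$, hence has a least element $\alpha_1>0$. If $\alpha_1=\gamma+1$, then $T_\gamma(\omega)\le s<T_{\gamma+1}(\omega)$, and since $\mu$ has no jump in $\rrbracket T_\gamma,T_{\gamma+1}\llbracket$ we must have $s=T_\gamma(\omega)$, a contradiction; if $\alpha_1$ is a limit ordinal, then $T_{\alpha_1}(\omega)=\sup_{\gamma<\alpha_1}T_\gamma(\omega)\le s$, again a contradiction. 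As $\mu$ is integer valued, $\mu(\omega,\cdot\times\CE)$ is a sum of unit point masses sitting at the jump times, so this shows $\mu$ is supported on $\bigcup_{\alpha\in\gimel}\llbracket T_\alpha\rrbracket$.

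The step I expect to be the genuine obstacle is the interaction of the limit stage with the exhaustion and support arguments: one has to check that forming $T_\beta=\sup_{\alpha<\beta}T_\alpha$ at limit ordinals never overshoots a jump of $\mu$ (this is precisely what the least-ordinal argument in the support step rules out) and that, pathwise, the recursion actually reaches $\infty$ somewhere along $\gimel$ — where the monotone-function clause of the Principle of Transfinite Induction is essential, and where one relies on the well-ordered-jumps hypothesis to guarantee the strict increase at successor stages below $\infty$. A minor technical nuisance, to be disposed of by the conventions above, is the correct reading of the well-ordered-jumps hypothesis when no further jump exists (so $T_{\alpha+1}$ is legitimately $\infty$) and the treatment of a possible jump at time $0$.
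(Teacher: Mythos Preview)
Your proposal is correct and follows essentially the same route as the paper: define $T_0=0$, use the well-ordered-jumps hypothesis for successor ordinals, take countable suprema at limit ordinals, and invoke the monotone-function clause of the transfinite induction principle to obtain termination at $\infty$. Your treatment is in fact more thorough than the paper's, which leaves the support property and the strict-monotonicity verification implicit; your least-ordinal argument for support is a welcome addition.
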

\begin{proof}
 Define $T_0=0$. Under the conditions of the theorem, if $T_\alpha$ is defined then we can find a stopping time $T_{\alpha+1}$ satsifying the conditions. If $\alpha$ is a limit ordinal, then we define $T_\alpha = \sup_{\beta<\alpha} T_\beta$, which is a stopping time as the set $\beta<\alpha$ is countable. Note that as $\alpha \mapsto T_\alpha(\omega)$ is a monotonic increasing function from $\beta$ into $\extreal$ for each $\omega$, then there is an ordinal $\alpha\in\gimel$, depending on $\omega$,  such that $T_\beta(\omega)=T_\alpha(\omega)$ for every $\beta\geq \alpha$. As $T_\beta>T_\alpha$ for all $\beta>\alpha$ unless $T_\alpha=\infty$, we see that $T_\alpha=\infty$.
\end{proof}

\begin{theorem}[Elliott \cite{Elliott1976}]\label{thm:MRTfinitejump}
In the setting of a random measure with well-ordered jumps, any $\{\F_t\}_{t\geq0}$-local martingale $M$ has a representation
\[M_t = M_0 + \int_{]0,t]\times \CE} H(\omega, s, x) \tilde\mu(ds, dx)\]
for some predictable, locally $\mu_p$-integrable function $H$.
\end{theorem}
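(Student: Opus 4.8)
The plan is to argue by transfinite induction along the family of stopping times $\{T_\alpha\}_{\alpha\in\gimel}$ furnished by Lemma~\ref{lem:transfiniteinductionlemma1}, using the single-jump representation of Theorem~\ref{Thm15.14} to cross each successive jump. First I would localize: after replacing $M$ by $M^{S_n}$ for a sequence $S_n$ reducing the local martingale it suffices to treat a uniformly integrable, hence closed, martingale, and by subtracting $M_0$ we may assume $M_0=0$. For $\alpha\in\gimel$ let $P(\alpha)$ be the statement that there is a predictable function $H^\alpha$ supported on $\rrbracket 0,T_\alpha\rrbracket$, stochastically integrable against $\tilde\mu$ and locally $\mu_p$-integrable, with $M_{t\wedge T_\alpha}=(H^\alpha*\tilde\mu)_t$ for every $t$, and which is \emph{consistent}, meaning $H^\alpha=H^\gamma$ on $\rrbracket 0,T_\gamma\rrbracket$ for all $\gamma\le\alpha$. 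The base case $P(0)$ is immediate, since $T_0=0$ and one may take $H^0\equiv 0$.

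For the successor step, assume $P(\alpha)$. On $\rrbracket T_\alpha,T_{\alpha+1}\rrbracket$ the well-ordering hypothesis says that, conditionally on $\F_{T_\alpha}$, $\mu$ has exactly one jump, at $T_{\alpha+1}$, and because $\F$ is the natural filtration of $\mu$ it gains no information strictly between $T_\alpha$ and $T_{\alpha+1}$. Thus, viewed from time $T_\alpha$ with $\F_{T_\alpha}$ in the role of the initial $\sigma$-algebra, the increment process $M_{(\cdot\vee T_\alpha)\wedge T_{\alpha+1}}-M_{\cdot\wedge T_\alpha}$ is a closed martingale of a single-jump filtration, and the restriction of $\tilde\mu$ to $\rrbracket T_\alpha,T_{\alpha+1}\rrbracket$ is the associated compensated single-jump measure. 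Applying Theorem~\ref{Thm15.14} conditionally on $\F_{T_\alpha}$ — which is legitimate, since that theorem permits an $\F_0$-measurable random $\nu$ — produces a predictable $g_\alpha$, supported on $\rrbracket T_\alpha,T_{\alpha+1}\rrbracket$, obeying the integrability bound of that theorem, with $M_{t\wedge T_{\alpha+1}}-M_{t\wedge T_\alpha}=(g_\alpha*\tilde\mu)_t$. Setting $H^{\alpha+1}:=H^\alpha+g_\alpha$ yields $P(\alpha+1)$, consistency being automatic since $g_\alpha$ vanishes on $\rrbracket 0,T_\alpha\rrbracket$.

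The main obstacle is the limit step. Let $\beta$ be a limit ordinal with $P(\alpha)$ true for all $\alpha<\beta$. As the $T_\alpha$ increase to $T_\beta=\sup_{\alpha<\beta}T_\alpha$, consistency lets me paste the $H^\alpha$ into a single predictable $H^\beta$ on $\bigcup_{\alpha<\beta}\rrbracket 0,T_\alpha\rrbracket$, which exhausts $\rrbracket 0,T_\beta\llbracket$. Along $\alpha\uparrow\beta$ one has $t\wedge T_\alpha\uparrow t\wedge T_\beta$, so, using right-continuity and left limits, $M_{t\wedge T_\alpha}\to M_{(t\wedge T_\beta)-}$ and likewise $(H^\beta*\tilde\mu)_{t\wedge T_\alpha}\to(H^\beta*\tilde\mu)_{(t\wedge T_\beta)-}$, which gives the representation up to, but not including, $T_\beta$. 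Two points then need care. The first is that $H^\beta$ is genuinely stochastically integrable and locally $\mu_p$-integrable on $\rrbracket 0,T_\beta\rrbracket$: this comes from feeding the uniform $L^1$-control on $M$ through the integrability estimate of Theorem~\ref{Thm15.14} at each stage, together with monotone convergence as $\alpha\uparrow\beta$, since $[H^\beta*\tilde\mu]$ and $|H^\beta|*\mu_p$ increase along the interval. The second is the possible jump of $\mu$ at the limiting time $T_\beta$ itself; since $\mu$ is carried by $\bigcup_\gamma\llbracket T_\gamma\rrbracket$, any such jump is the jump $\llbracket T_\beta\rrbracket$, which conditionally on $\F_{T_\beta-}$ is once more a single jump, so the single-jump construction extends $H^\beta$ to $\rrbracket 0,T_\beta\rrbracket$ with $\Delta M_{T_\beta}=\Delta(H^\beta*\tilde\mu)_{T_\beta}$. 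This establishes $P(\beta)$.

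By the principle of transfinite induction, $P(\alpha)$ holds for every $\alpha\in\gimel$, and the consistency of the $H^\alpha$ lets me patch them into a single predictable function $H$ on $\bigcup_\alpha\rrbracket 0,T_\alpha\rrbracket$. Since, by Lemma~\ref{lem:transfiniteinductionlemma1}, for every $\omega$ there is $\alpha$ with $T_\alpha(\omega)=\infty$, this $H$ is defined everywhere, and for each fixed $(\omega,t)$, choosing such an $\alpha$ and invoking $P(\alpha)$ gives $M_t(\omega)=(H*\tilde\mu)_t(\omega)$; reinstating $M_0$ yields $M=M_0+H*\tilde\mu$, with $H$ inheriting stochastic integrability and local $\mu_p$-integrability from the $H^\alpha$. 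Finally I would undo the preliminary localization, pasting the representations obtained for the various $M^{S_n}$ (which agree on overlaps by the uniqueness part of Theorem~\ref{Thm15.14}), which also delivers local $\mu_p$-integrability of $H$ on all of $[0,\infty[$. The delicate part of the whole argument is making the transfinite induction pass through limit ordinals: it hinges on the consistency built into the $H^\alpha$, on the closability provided by the preliminary localization, and on correctly isolating the jump at the limiting stopping time.
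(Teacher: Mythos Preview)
Your proposal is correct and follows essentially the same route as the paper: localize to a uniformly integrable martingale, run transfinite induction along the family $\{T_\alpha\}_{\alpha\in\gimel}$ of Lemma~\ref{lem:transfiniteinductionlemma1}, invoke the single-jump representation (Theorem~\ref{Thm15.14}) conditionally on $\F_{T_\alpha}$ for the successor step, and use martingale convergence at limit ordinals. The one place where the paper is slightly cleaner is at the jump on $\llbracket T_\beta\rrbracket$ for a limit ordinal $\beta$: rather than re-invoking the single-jump construction there, the paper observes directly that $T_\beta$ is predictable, so $E[M_{T_\beta}\mid\F_{T_\beta-}]=M_{T_\beta-}$, and then the Doob--Dynkin lemma gives $\Delta M_{T_\beta}=g(\omega,Z_{T_\beta})$ with $\int_\CE g\,d\mu_p(\{T_\beta\}\times dz)=0$, yielding the representation at $T_\beta$ immediately.
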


\begin{proof}
By localizing, we can assume that $M$ is a uniformly integrable martingale. For notational simplicity, we write $H_t$ for $H(\omega, t, z)$.  We will show that there is a predictable function $H$ such that for any $\alpha\in\gimel$,
\begin{equation}\label{eq:MRTtransfiniteinduction1}
 M_{T_\alpha} = M_0 + \int_{]0,T_\alpha]\times\CE} H_t d\tilde\mu.
\end{equation}
This is clear for $\alpha=0$.  We now work using transfinite induction. 

Suppose that we can establish (\ref{eq:MRTtransfiniteinduction1}) for a given $\alpha$. Then consider the filtration $\tilde\F_t=\F_{t+T_\alpha}$. By Theorem \ref{Thm15.14} we can find $H$ such that
\[ M_{T_{\alpha+1}} = M_{T_\alpha} +  \int_{]T_\alpha,T_{\alpha+1}]\times\CE} \bar H_t d\tilde\mu.\]
Defining $\tilde H = I_{[0, T_{\alpha}]}H + I_{]T_\alpha, T_{\alpha+1}]} \bar H$,  rearrangement yields
\[ M_{T_{\alpha+1}} = M_0 +  \int_{]0,T_{\alpha+1}]\times\CE} \tilde H_t d\tilde\mu,\]
so (\ref{eq:MRTtransfiniteinduction1}) holds for $\alpha+1$.

Now suppose that we can establish (\ref{eq:MRTtransfiniteinduction1}) for all $\alpha<\beta$, where $\beta$ is a limit ordinal. As $M$ is assumed to be uniformly integrable, by martingale convergence we have (the convergence being almost sure),
\[
   M_{T_\beta-} = \lim_{\alpha\uparrow \beta}M_{T_\alpha} = \lim_{\alpha\uparrow\beta} \Big(M_0+ \int_{]0,T_\alpha]\times\CE} H_t d\tilde\mu\Big) = M_0 + \int_{]0,T_\beta[\times\CE} H_t d\tilde\mu
\]
for some $H:{\rrbracket 0,T_\beta\llbracket}\times\CE \to \bR$. Furthermore, by the Doob--Dynkin lemma we know that $M_{T_\beta} = M_{T_\beta-} + g(\omega, Z_{T_\beta}(\omega))$, where $Z_{T_\beta}(\omega)$ is the unique value in $\CE$ such that  $\mu(\omega, \{T_\beta\}\times\{Z_{T_\beta}(\omega)\})=1$, if such a value exists, and $Z_{T_\beta}(\omega) = \emptyset_\CE$ otherwise (where $\emptyset_\CE$ denotes a value not in $\CE$), and $g$ is some $\CF_{T_{\beta}-}\otimes (\FE\vee \sigma(\{\emptyset_\CE\}))$-measurable function. As $T_\beta$ is predictable for $\beta$ a limit ordinal,  $E[M_{T_\beta}|\F_{T_{\beta-}}] = M_{T_{\beta}-}$, so $\int_{\CE} g(\omega, z) \mu_p(\omega, \{T_\beta\}\times dz)=0$. Therefore
\[ M_{T_\beta} = M_{T_{\beta}-} + \int_{\CE} g(\omega, z) \mu (\omega, \{T_\beta\}\times dz)= M_{T_{\beta}-} + \int_{\CE} g(\omega, z) \tilde \mu (\omega, \{T_\beta\}\times dz)\]
which implies, writing $H_{T_\beta}=g$,
\[M_{T_\beta} =  M_0 + \int_{]0,T_\beta]\times\CE} H_t d\tilde\mu.\]
By the principle of transfinite induction, we therefore know that (\ref{eq:MRTtransfiniteinduction1}) holds for all $\alpha\in\gimel$. As mentioned, for each $\omega$ there exists an $\alpha$ such that $T_\beta =\infty$ for all $\beta>\alpha$, so we have 
\[  M_\infty = M_0 + \int_{]0,\infty]\times\CE} H_t d\tilde\mu.\]
Stopping $M$ at time $t$ and taking a conditional expectation, we obtain the desired representation.
\end{proof}

\section{Parochial random measures}\label{sec:Parochial}

We now focus on a specific subclass of the random measures, the `parochial' measures, for which we can prove a martingale representation theorem holds. 
Rather than attempt an explicit definition of the class, we give a recursive definition, and then provide some special cases of measures in the class. 

\begin{definition}
Consider the  optional random measures  $\mu\in\tilde\CA^1_\sigma$ in a filtration $\{\F_t\}_{t\geq 0}$, and denote by $\mu_p$ the compensator of $\mu$ in this filtration. We say $\mu$ is parochial if it can be obtained through finitely many applications of the following recursive definition.
\begin{enumerate}[(P1)]
 \item Any random measure with well ordered jumps is parochial.
 \item A random measure is parochial whenever there exists an increasing family of sets $\{A_n\}_{n\in\bN}\subset \tilde\Sigma_p$ such that $\cup_n A_n = [0,\infty]\times\CE$ and, for every $n$, 
\begin{itemize}
 \item $I_{A_n} \cdot \mu$  is parochial in its natural filtration, and 
\item  the compensator of $I_{A_n} \cdot \mu$  in its natural filtration is $I_{A_n} \cdot \mu_p$.
\end{itemize}

 \item Any `right-parochial' random measure is parochial. A random measure is `right-parochial' if for any stopping time $S$ there exists a stopping time $T$ with $T>S$ on the set $\{S<\infty\}$ such that
\[\mu^{(S)}(\omega, [0,t]\times dz) := \mu(\omega, [S, (t+S)\wedge T]\times dz)\]
is parochial in the filtration $\{\tilde \F^{(S)}_t = \F_{(t+S)\wedge T}\}_{t\geq 0}$.
\end{enumerate}
\end{definition}

\begin{remark}
 Clearly we could replace (P1) with the statement `a random measure with a single jump is parochial', and then obtain the well-ordered jump case by applying (P3). From the result of Lemma \ref{lem:parochialexamples} below, we can also replace (P1) with the statement `the random measures in $\tilde\CA^1_\sigma$ with \emph{$\F_0$-measurable} compensators are parochial', and will obtain the same definition by applying (P3).
\end{remark}
\begin{remark}
 This definition is, in some sense, `open', as it is motivated by the methods we have at our disposal for extending martingale representation theorems, as we will see in Theorem \ref{thm:MRTparochial}. If further extension results are available, the definition of parochial can be augmented to allow these cases.
\end{remark}

We now give some useful examples of parochial measures, which demonstrates that this is a richer class of random measures than it may first seem. The `locality' implicit the the examples (iii) and (v) is what suggested the term `parochial'. In practice, example (ii), coupled with the recursive definition, allows many natural examples of parochial measures to be obtained.

\begin{lemma}\label{lem:parochialexamples}
The following special cases  are parochial.
\begin{enumerate}[(i)]
\item Any random measure with well ordered jumps in its natural filtration.
\item Any random measure with deterministic compensator (or more generally with $\F_0$-measurable compensator). This includes all jump processes with independent increments.
\item Any random measure where $\mu_p(\omega, [0,t]\times dx)$ is $\F_{t-\epsilon}$-measurable, for some fixed $\epsilon$ and all $t$.
\item More generally any random measure with right-deterministic compensator, that is, where for every $\{\F_t\}_{t\geq0}$-stopping time $S$ there exists an $\{\F_t\}_{t\geq0}$-stopping time $T>S$ and a random measure $\nu^S$ which is $\F_S\otimes \CB([0,\infty])\otimes\FE$-measurable such that $I_{\llbracket S, T\rrbracket}\cdot \mu_p=I_{\llbracket S, T\rrbracket}\cdot \nu^S$
\item Any random measure where there exists a partition $\{B_n\}_{n\in\bN}$ of $\CE$ such that  $\mu(\omega, [0,t]\times B_n)$ is parochial, and $I_{B_n}\cdot\mu(\omega, [0,t]\times dx)$ is independent of $I_{B_m}\cdot \mu(\omega, [0,t]\times dx)$ for all $m\neq n$. 
\end{enumerate}
\end{lemma}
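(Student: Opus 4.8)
\textbf{Proof proposal for Lemma \ref{lem:parochialexamples}.}

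The plan is to verify each item by reducing it to one of the three clauses (P1)--(P3) in the definition of parochial, using the earlier results where available. Item (i) is literally (P1), so nothing is to be done. For item (ii), the natural approach is to show directly that a random measure $\mu\in\tilde\CA^1_\sigma$ with $\F_0$-measurable compensator is parochial by exhibiting it as a right-parochial measure via (P3): given a stopping time $S$, one wants to produce $T>S$ so that the restricted measure $\mu^{(S)}$ on $\rrbracket S,T\rrbracket$ is parochial in the shifted filtration. The cleanest route is to first argue, via Lemma \ref{lem:transfiniteinductionlemma1}-style reasoning, that an $\F_0$-measurable compensator forces the jumps to be well ordered: if the compensator is $\F_0$-measurable then, between any two successive stopping times, the structure is that of a single-jump measure in the sense of Section 3 (the function $F_t$ is $\F_0$-measurable and deterministic-in-$\omega$ given $\F_0$), so one can iterate to get the stopping times $T_\alpha$ of the well-ordered case; hence (ii) follows already from (P1). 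Alternatively, and perhaps more robustly, one invokes the remark immediately preceding the lemma, which asserts that (P1) may be replaced by `measures with $\F_0$-measurable compensator are parochial' — but since that remark itself cites this lemma, the honest order is to prove (ii) from scratch here. The subcase of independent increments then follows since the compensator of a process with independent increments is deterministic.

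For item (iii), the observation is that if $\mu_p(\omega,[0,t]\times dx)$ is $\F_{t-\epsilon}$-measurable for a fixed $\epsilon>0$, then on any interval of length $\epsilon$ the compensator is measurable with respect to the $\sigma$-algebra at the left endpoint; so one partitions $[0,\infty[$ into the deterministic grid $\{[k\epsilon,(k+1)\epsilon[\}_{k\in\bN}$ and applies (P3) with the deterministic stopping times $S=k\epsilon$, $T=(k+1)\epsilon$: on each such block $\mu^{(S)}$ has $\F^{(S)}_0$-measurable compensator, hence is parochial by item (ii), and finitely many (in fact countably many, handled by nesting the blocks and using (P2) as well) applications suffice. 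Item (iv) is the same argument with the deterministic grid replaced by the family of stopping times $T>S$ supplied by the hypothesis of right-determinism: for each such pair $(S,T)$ the restricted compensator agrees with the $\F_S$-measurable measure $\nu^S$, so $\mu^{(S)}$ has $\F^{(S)}_0$-measurable compensator and is parochial by (ii); right-parochiality, hence parochiality, follows from (P3). One must be a little careful that the recursive definition permits only \emph{finitely many} applications — here this is not an issue because each of (iii) and (iv) is a single application of (P3) (the `for any stopping time $S$' is built into clause (P3) itself), preceded by one application of (ii).

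Item (v) is the one I expect to be the main obstacle, because it is the only place where the independence structure across the partition of the mark space $\CE$ must be exploited, and the delicate point is the compatibility of compensators required by clause (P2). The plan is to set $A_n = [0,\infty]\times(B_1\cup\cdots\cup B_n)$, an increasing family in $\tilde\Sigma_p$ with union $[0,\infty]\times\CE$, and to apply (P2). Two things must be checked: that $I_{A_n}\cdot\mu$ is parochial in its own natural filtration, and that its natural-filtration compensator equals $I_{A_n}\cdot\mu_p$. For the first, $I_{A_n}\cdot\mu$ decomposes as the `sum' of the independent pieces $I_{B_1}\cdot\mu,\dots,I_{B_n}\cdot\mu$, each assumed parochial; one argues that a finite superposition of independent parochial measures is parochial (intuitively, by interleaving the recursive constructions — e.g. running (P3) on each coordinate and taking the coarsest common refinement of the resulting stopping-time families), and this is where the bulk of the work lies. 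For the second, the key is that because the blocks $B_m$ are mutually independent, the natural filtration of $I_{A_n}\cdot\mu$ carries no information about the behaviour of $\mu$ on $\CE\setminus(B_1\cup\cdots\cup B_n)$, so conditioning on it does not change the compensator of the restricted measure: $(I_{A_n}\cdot\mu)_p$ computed in the small filtration coincides with $I_{A_n}\cdot\mu_p$. Making this last step rigorous — essentially an independence-and-conditioning argument showing the dual predictable projection is unaffected by enlarging to the full filtration on the independent complement — is the technical heart of the lemma; once it is in place, (P2) delivers parochiality of $\mu$ itself.
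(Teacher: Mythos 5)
There is a genuine error in your treatment of case (ii), and it is precisely the point on which the whole lemma turns. You propose to show that an $\F_0$-measurable compensator ``forces the jumps to be well ordered'' and hence to deduce (ii) from (P1) alone (or from (P1) plus (P3)). This is false: an infinite-activity pure-jump L\'evy process (e.g.\ a gamma process) has a deterministic compensator but infinitely many jumps in every interval, so there is no ``first jump after $S$'' and no family $\{T_\alpha\}$ as in Lemma \ref{lem:transfiniteinductionlemma1}; the single-jump/well-ordered reduction breaks down exactly for the processes that the statement of (ii) explicitly claims to cover. Routing through (P3) does not help either, since the restricted measure on $\rrbracket S,T\rrbracket$ still has infinitely many jumps. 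The paper's argument instead exploits the standing assumption $\mu\in\tilde\CA^1_\sigma$: there is an $\F_0$-measurable increasing family $\{A_n\}\subset\CB([0,\infty])\otimes\FE$ exhausting $[0,\infty]\times\CE$ with $\mu_p(A_n)<\infty$, so each $I_{A_n}\cdot\mu$ has almost surely \emph{finitely many} jumps, hence well-ordered jumps, and is parochial by (P1); the $\F_0$-measurability of $\mu_p$ makes the compensator-compatibility condition of (P2) automatic, and (P2) then gives parochiality of $\mu$. The essential ingredient you are missing is this use of (P2) with a partition of the \emph{mark space} into pieces of finite compensator mass; without it, (ii) does not follow, and since your (iii) and (iv) reduce to (ii) (correctly, and in the same way as the paper), the gap propagates to them.

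On case (v): your plan ($A_n=[0,\infty]\times(B_1\cup\cdots\cup B_n)$, apply (P2), use independence to identify the natural-filtration compensator of the restriction with $I_{A_n}\cdot\mu_p$) is the paper's argument. You flag, but do not carry out, the verification that a finite superposition of independent parochial measures is parochial; the paper's proof is equally terse on this point, so this is a fair observation rather than a defect relative to the paper. The decisive problem remains case (ii).
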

\begin{proof}
 Case (i) is trivial. Case (iii) follows from (iv). We prove the remaining cases.

\textbf{Case (ii).}
As we have assumed $\mu\in\CA^1_\sigma$, there exists an $\F_0$-measurable increasing sequence of  sets $\{A_n\}_{n\in\bN}\subset \CB([0,\infty])\otimes \FE$ which almost surely partition $[0,\infty]\times \CE$ and $\mu_p(A_n)<\infty$. This implies that $\mu$ almost surely has finitely many jumps in $A_n$, and so these jumps are well ordered. Hence by (P2) of the definition of parochial measures, as measures with well ordered jumps are parochial (P1),  we see $\mu$ is parochial.

\textbf{Case (iv).}
This is simply an application of (P3) of the definition of parochial measures applied to the $\F_0$-measurable measures considered in Case (ii). 

\textbf{Case (v).} The independence of the jumps implies the compensator of $I_{B_n}\cdot\mu(\omega, [0,t]\times dx)$ is predictable in the natural filtration of $I_{B_n}\cdot\mu(\omega, [0,t]\times dx)$, so the result follows by applying (P2) with $A_n=\cup_{m\leq n}B_m$.
\end{proof}

We can now present a martingale representation theorem for parochial random measures. We write $\CH^p$ for the family of martingales with $\|M\|_{\CH^p} := E[\sup_t |M_t|^p]^{1/p}<\infty$, for $p\geq 1$. We begin with a definition and preliminary lemma.

\begin{definition}
 We say that a local martingale or martingale random measure has the predictable representation property if every $\{\F_t\}_{t\geq 0}$-local martingale  $M$ can be written in the form 
\[M_t = M_0 + \int_{]0,t]\times\CE} H_t d\tilde\mu\]
for some $\tilde\mu$-stochastically integrable function $H$.
\end{definition}

\begin{lemma}\label{lem:approxterminal}
 Suppose that for every $B\in\F_\infty$ we can write
\[E[I_B|\F_t] = E[I_B|\F_0] + \int_{]0,t]\times\CE} H_t d\tilde\mu\]
for some $\tilde\mu$-stochastically integrable function $H$. Then $\tilde\mu$ has the predictable representation property.
\end{lemma}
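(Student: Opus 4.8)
The plan is to reduce the representation of an arbitrary bounded martingale to the representation of the terminal indicators $E[I_B \mid \F_t]$ which is assumed, and then to pass from bounded martingales to all local martingales by the standard localization/approximation arguments. So I would proceed in three stages.

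First, I would observe that the hypothesis is stable under taking finite linear combinations: if $E[I_B \mid \F_t]$ and $E[I_{B'} \mid \F_t]$ admit representations with stochastically integrable integrands $H$ and $H'$, then for constants $a,b$ the process $E[aI_B + bI_{B'} \mid \F_t]$ is represented by $aH + bH'$ (linearity of the stochastic integral with respect to $\tilde\mu$, together with the fact that the class of $\tilde\mu$-stochastically integrable functions is a vector space — this is immediate from the definition via $\CA_\loc$). Consequently every random variable of the form $\sum_{i=1}^n a_i I_{B_i}$ with $B_i \in \F_\infty$, i.e.\ every bounded $\F_\infty$-measurable simple function, has a terminal value of the form $M_0 + (H * \tilde\mu)_\infty$ with $H$ stochastically integrable. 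Since bounded $\F_\infty$-measurable random variables are $L^2$-limits (indeed uniform limits) of such simple functions, I would next want to take limits.

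Second, for the limiting step I would set $\xi$ to be a bounded $\F_\infty$-measurable random variable, pick simple $\xi_n \to \xi$ in $L^2$, let $M^n_t = E[\xi_n \mid \F_t]$ and $M_t = E[\xi \mid \F_t]$, with representations $M^n = M^n_0 + H^n * \tilde\mu$. By the isometry in Theorem \ref{thm:QVrandommeasure} (or rather its $L^2$ consequence, $E[((H-H')*\tilde\mu)_\infty^2] = E[(H-H')^2 * \la\tilde\mu\ra_\infty]$ for the purely discontinuous martingale part) combined with Doob's inequality, $(M^n)$ is Cauchy in $\CH^2$, hence so is the orthogonal family $(H^n - \text{const})$ in the Hilbert space $L^2(\la\tilde\mu\ra \times \bP)$; let $H$ be the limit. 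Then $H$ is $\tilde\mu$-stochastically integrable (the space is complete) and $H * \tilde\mu$ is the $\CH^2$-limit of $H^n * \tilde\mu = M^n - M^n_0$, so $H*\tilde\mu = M - M_0$ up to indistinguishability. This gives the representation for every martingale in $\CH^2$ (equivalently, closed by a bounded, or square-integrable, terminal variable), after a routine $L^2$-truncation argument to pass from bounded to square-integrable $\xi$.

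Third, I would remove the integrability restriction by localization: given an arbitrary $\{\F_t\}$-local martingale $M$, choose a localizing sequence of stopping times $S_k \uparrow \infty$ such that each stopped process $M^{S_k}$ is a uniformly integrable martingale; by a further truncation one may assume $M^{S_k} \in \CH^2$ (e.g.\ stop when $|M|$ first exceeds $k$, using that $M$ has left limits), so the previous step gives $M^{S_k}_t = M_0 + (H^{(k)} * \tilde\mu)_t$. Uniqueness of the integrand on each stochastic interval $\rrbracket 0, S_k \rrbracket$ (as in the uniqueness argument of Theorem \ref{Thm15.14}, via $E[\int (H^{(k)} - H^{(k+1)})^2 d\la\tilde\mu\ra] = 0$ on that interval) shows the $H^{(k)}$ are consistent, so they paste together to a single predictable $H = \sum_k H^{(k)} I_{\rrbracket S_{k-1}, S_k\rrbracket}$ which is $\tilde\mu$-stochastically integrable (the defining $\CA_\loc$-condition is a local one) and satisfies $M = M_0 + H * \tilde\mu$. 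I expect the main obstacle to be the second stage: one must be careful that $L^2$-convergence of the integrands in $L^2(\la\tilde\mu\ra\times\bP)$ really does transfer to convergence of the stochastic integrals as purely discontinuous martingales and that the limit integrand still meets the stochastic-integrability condition — this is exactly where Theorem \ref{thm:QVrandommeasure} and the completeness of the integrand space do the work, and where one should invoke that $\tilde\mu$ is a martingale random measure (so there is no continuous part to worry about).
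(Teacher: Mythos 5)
Your first two stages (linearity over simple indicator combinations, then an $L^2$/It\=o-isometry limit covering all martingales closed by a square-integrable terminal variable) are sound and essentially parallel the paper's opening step, which does the same thing for terminal values in $L^{1+\epsilon}$ using the BDG and Doob $L^p$ inequalities in place of the isometry. The genuine gap is in your third stage. You claim that by stopping at $S_k=\inf\{t:|M_t|\ge k\}$ one may assume $M^{S_k}\in\CH^2$. This is false in general: $\sup_t|M_{t\wedge S_k}|\le k+|\Delta M_{S_k}|$, and the overshoot $\Delta M_{S_k}$ is only (locally) integrable, not square-integrable. A compensated single jump whose size has finite mean but infinite variance is a uniformly integrable martingale that is locally in $\CH^1$ but not locally in $\CH^2$; no localizing sequence brings it within reach of your second stage. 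Since the lemma must cover such martingales (they already arise in the single-jump setting of Theorem \ref{Thm15.14}), your reduction as written does not close.

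The paper bridges exactly this gap with an intermediate step you are missing: having treated terminal values in $L^{1+\epsilon}$, it uses the density of bounded martingales in $\CH^1$ (Dellacherie--Meyer) together with the BDG inequality at the $L^1$ level, namely $E\big[\big((H^n-H^m)^2*[\tilde\mu]\big)_\infty^{1/2}\big]\le K\|M^{(n)}-M^{(m)}\|_{\CH^1}$, to pass to all of $\CH^1$; stochastic integrability of the limiting integrand then comes from $[M]^{1/2}\in\CA_\loc$ for a semimartingale rather than from an $L^2$ bound. Only after that does one localize, and localization of an arbitrary local martingale into $\CH^1$ \emph{is} always possible, because the overshoot at the stopping time is integrable by optional stopping. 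If you replace your $\CH^2$-localization by this $\CH^1$ density-plus-BDG argument, the rest of your proof (linearity, Cauchy integrands, pasting via uniqueness on stochastic intervals) goes through essentially as you wrote it.
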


\begin{proof}
First suppose $M$ is a uniformly integrable martingale and $M_\infty\in L^{1+\epsilon}$ for some $\epsilon>0$. Then there exists a sequence of simple random variables $M_\infty^{(n)}$ (ie of the form $M_\infty^{(n)}=\sum_{k=1}^n c_k I_{B_k}$) which converge to $M_\infty$ in $L^{1+\epsilon}$. By linearity of the integral, every simple function $M_\infty^{(n)}$ has the desired representation in terms of an integrand $H^n$, and by the BDG inequality and Doob's $L^p$ inequality,
\[E\Big[\Big(\int_{]0,\infty]\times\CE} (H_t^n- H_t^{m})^2 d[\tilde\mu]\Big)^\frac{1+\epsilon}{2}\Big]^{1/(1+\epsilon)}\leq K\|M_\infty^{(n)}-M_\infty^{(m)}\|_{L^{1+\epsilon}} \to 0,\]
for some constant $K$ depending on $\epsilon$. Therefore there exists a limiting process $H$, which is stochastically integrable with respect to $\tilde\mu$, such that 
\[M_\infty =  M_0 + \int_{]0,\infty]} H_t d\tilde\mu.\]

Now suppose $\sup_t|M_t| \in L^1$, so $M\in\CH^1$. For any $\epsilon>0$, as $\CH^{1+\epsilon}$ is dense in $\CH^1$ (indeed from Dellacherie and Meyer \cite[VII.3, Theorem 71]{Dellacherie1975} the bounded martingales are dense in $\CH^1$), we can find a sequence $M^{(n)}$ such that $\|M^{(n)} - M\|_{\CH^1} \to 0$ and $M^{(n)}_\infty\in L^{1+\epsilon}$. Therefore, from the BDG inequality, as $M^{(n)}$ has the desired representation for each $n$, there exists a constant $K$ such that 
\[E\Big[\Big(\int_{]0,\infty]\times\CE} (H_t^n- H_t^{m})^2 d[\tilde\mu]\Big)^\frac{1}{2}\Big]\leq K\|M^{(n)} - M^{(m)}\|_{\CH^1} \to 0,\]
and so an appropriate limit $H$ exists. That $H$ is stochastically integrable with respect to $\tilde\mu$ follows from the fact $M$ is a semimartingale, so $[M]^{1/2}\in\CA_\loc$.

Finally, if $M$ is a local martingale, then the stopping times $T_n= \inf\{t: |M_{t}|\geq n\}$ are a localizing sequence with $\sup_{t} |M_{t\wedge T_n}|\leq n+|\Delta M_{T_n}| \in L^1$. We can therefore write
\[M_{t\wedge T_n} = M_0 + \int_{]0,t\wedge T_n]} H_s d\tilde\mu\]
for each $n$, and pasting yields the desired representation.
\end{proof}

\begin{remark}
 From the BDG inequality, we also see that this representation is unique, in the sense that if $M=a+ (g*\tilde \mu) = b+(h*\tilde\mu)$, then $a=b=M_0$ and for $T_n$ a localizing sequence such that the stopped processes satisfy $M^{T_n}\in\CH^1$, we have
\[E\Big[\Big(\int_{]0,T_n]\times\CE} (g_t- h_t)^2 d[\tilde\mu]\Big)^\frac{1}{2}\Big] =0\]
for all $n$, and hence $E\big[\big((g_t- h_t)^2 * \la\tilde\mu\ra\big)_\infty\big] =0$.
\end{remark}

\begin{theorem}\label{thm:MRTparochial}
 Let $\mu\in \tilde\CA^1_\sigma$ be a parochial random measure in its natural filtration $\{\F_t\}_{t\geq 0}$. Write $\tilde\mu=\mu-\mu_p$. Then $\tilde\mu$ has the predictable representation property.
\end{theorem}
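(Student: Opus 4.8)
The plan is to prove the theorem by induction on the number of applications of the recursive rules (P1)--(P3) used to construct $\mu$, using Lemma \ref{lem:approxterminal} to reduce from general local martingales to bounded ones of the form $E[I_B\mid \F_t]$. The base case is a random measure with well-ordered jumps, which is covered directly by Theorem \ref{thm:MRTfinitejump}. For the inductive step we must show each of the two genuinely recursive constructions preserves the predictable representation property; since the number of steps is finite, transfinite induction is not needed here, just ordinary induction on construction depth.

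First I would treat rule (P3), the right-parochial case. Fix $B\in\F_\infty$ and set $N_t = E[I_B\mid\F_t]$. Following the transfinite-induction scheme already used in the proof of Theorem \ref{thm:MRTfinitejump}, I would build stopping times: given $S$, the hypothesis of (P3) produces $T>S$ such that the shifted measure $\mu^{(S)}$ is parochial (in strictly fewer construction steps) in the shifted filtration $\{\tilde\F^{(S)}_t\}$. Applying the inductive hypothesis to $\mu^{(S)}$ gives a representation of $N$ on $\rrbracket S,T\rrbracket$ against $\tilde\mu$; I then iterate along the resulting family $\{T_\alpha\}_{\alpha\in\gimel}$ exactly as in Lemma \ref{lem:transfiniteinductionlemma1} and the proof of Theorem \ref{thm:MRTfinitejump}, pasting the integrands together and using almost-sure martingale convergence at limit ordinals (handling a possible predictable jump of $N$ at a predictable $T_\beta$ via Doob--Dynkin and the fact that $\int_\CE g\,\mu_p(\{T_\beta\}\times dz)=0$). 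A subtle point worth checking is that $\tilde\mu$ restricted to $\rrbracket S,T\rrbracket$ really coincides with the martingale measure $\widetilde{\mu^{(S)}}$ associated to $\mu^{(S)}$ in the shifted filtration; this is where the compatibility of compensators under time-shift is used.

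Next I would treat rule (P2): there is an increasing family $\{A_n\}\subset\tilde\Sigma_p$ with $\cup_n A_n = [0,\infty]\times\CE$, each $I_{A_n}\cdot\mu$ parochial in its own natural filtration with compensator $I_{A_n}\cdot\mu_p$. Write $\mu^{(n)} = I_{A_n}\cdot\mu$ and $\tilde\mu^{(n)} = \mu^{(n)} - I_{A_n}\cdot\mu_p = I_{A_n}\cdot\tilde\mu$, and let $\F^{(n)}$ be its natural filtration, so $\F^{(n)}\subset\F$. Given $B\in\F_\infty$ and $N_t = E[I_B\mid\F_t]$, I would first deal with the ``inner'' martingale $N^{(n)}_t = E[I_B\mid\F^{(n)}_t]$: by the inductive hypothesis $\mu^{(n)}$ has the predictable representation property in $\F^{(n)}$, so $N^{(n)}$ is an integral of some $H^{(n)}$ against $\tilde\mu^{(n)}$, hence against $\tilde\mu$ (the integrand supported on $A_n$). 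Letting $n\to\infty$, $\F^{(n)}\uparrow\F$ (since the $A_n$ exhaust the space, any jump of $\mu$ is eventually seen), so $N^{(n)}_\infty = E[I_B\mid\F^{(n)}_\infty]\to E[I_B\mid\F_\infty] = I_B$ in $L^2$ by martingale convergence; the BDG/Doob argument of Lemma \ref{lem:approxterminal} then shows the integrands $H^{(n)}I_{A_n}$ form a Cauchy sequence and converge to a stochastically integrable $H$ representing $N$. Then Lemma \ref{lem:approxterminal} upgrades this to the full predictable representation property.

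I expect the main obstacle to be rule (P2), specifically verifying that $\F^{(n)}\uparrow\F$ and that the $L^2$-convergence $N^{(n)}_\infty\to I_B$ is legitimate and compatible with the construction depth bookkeeping. The claim $\bigvee_n\F^{(n)} = \F$ needs the $A_n$ to be an \emph{increasing} exhausting family so that on any sample path every atom of $\mu$ lies in some $A_n$, making $\mu$ itself optional in $\bigvee_n\F^{(n)}$; one must also confirm the right-continuous completed version of this union agrees with $\F$, which is essentially the definition of the natural filtration together with a monotone class argument. The hypothesis that the compensator of $\mu^{(n)}$ in $\F^{(n)}$ equals $I_{A_n}\cdot\mu_p$ is exactly what guarantees $\tilde\mu^{(n)}$ sits inside $\tilde\mu$ as claimed, so this must be invoked carefully rather than taken for granted; without it the integrands against $\tilde\mu^{(n)}$ would not patch into an integrand against $\tilde\mu$. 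Once these measure-theoretic compatibilities are pinned down, the analytic part is the routine BDG argument already rehearsed in Lemma \ref{lem:approxterminal}.
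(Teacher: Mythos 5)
Your proposal is correct and its overall architecture --- reduction to terminal values $I_B$ via Lemma \ref{lem:approxterminal}, ordinary induction on the construction depth with Theorem \ref{thm:MRTfinitejump} as the base case, and transfinite induction along the stopping times $\{T_\alpha\}$ for rule (P3) --- is the same as the paper's. The one place you diverge is the conclusion of the (P2) step: the paper first treats $B$ in the algebra $\bigcup_n\F^n_\infty$ (where the representation of $E[I_B|\F^n_{(\cdot)}]$ is read as a representation of $E[I_B|\F_{(\cdot)}]$ thanks to the compensator hypothesis) and then extends to $\F_\infty=\sigma(\bigcup_n\F^n_\infty)$ by a monotone class argument, whereas you take an arbitrary $B\in\F_\infty$, represent the projected martingales $E[I_B|\F^{(n)}_{(\cdot)}]$, and pass to the limit using L\'evy's upward convergence $E[I_B|\F^{(n)}_\infty]\to I_B$. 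Both versions rest on exactly the two facts you flag --- that $\bigvee_n\F^{(n)}_\infty=\F_\infty$ and that the compensator compatibility lets integrals against $\tilde\mu^{(n)}=I_{A_n}\cdot\tilde\mu$ be read as integrals against $\tilde\mu$ --- and on the same It\^o-isometry Cauchy argument for the integrands; your version trades the monotone class theorem for martingale convergence, which is marginally more direct but obliges you also to track the convergence of the initial values $E[I_B|\F^{(n)}_0]$ when applying the isometry to differences. Your list of the delicate points (the identification of the shifted martingale measure with the restriction of $\tilde\mu$ in (P3), and $\F^{(n)}\uparrow\F$ in (P2)) matches precisely what the paper itself leaves to the reader, so there is no gap relative to the published argument.
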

\begin{proof}
 We proceed inductively, using the recursive nature of the definition of parochial measures as a guide.

\textbf{(1)} Theorem \ref{thm:MRTfinitejump} states that if the random measure has well ordered jumps, then it has the predictable representation property.

\textbf{(2)} By Lemma \ref{lem:approxterminal}, it is enough to prove the theorem under the assumption that $M_\infty = I_B$ for some $B\in\F_\infty$. Suppose that there exists an increasing family of sets $\{A_n\}_{n\in\bN}\subset \tilde\Sigma_p$ such that $\cup_n A_n = [0,\infty]\times\CE$ and, for every $n$, 
\begin{itemize}
 \item $I_{A_n} \cdot \tilde\mu$ has the predictable representation property in its natural filtration and
 \item the compensator of $I_{A_n} \cdot \mu$  in its natural filtration is $I_{A_n} \cdot \mu_p$.
\end{itemize}
Write $\{\F^n_t\}_{t\geq0}$ for the natural filtration of $I_{A_n}\cdot \mu$. 

If $B\in \F^n_\infty$ for some $n$, we have a representation for the martingale $M=E[I_B|\F^n_{(\cdot)}]$. As $I_{A_n}\cdot \mu_p$ is the compensator of $I_{A_n}\cdot \mu$ in $\{\F^n_t\}_{t\geq0}$, this representation is in terms of the $\{\F_t\}_{t\geq0}$-martingale random measure $\mu-\mu_p$, so we know that $M=E[I_B|\F_{(\cdot)}]$.

Let $\CC$ be the family of all sets $B\in\CF_\infty$ such that $M=E[I_B|\F_{(\cdot)}]$ has a representation of the desired form. From our earlier comments, $\CC$ contains the algebra of sets $\cup_{n\in\bN} \F^n_\infty$ (this is an algebra as $\F^n_\infty$ is increasing in $n$). If $\{B_m\}_{m\in\bN}$ is an increasing sequence in $\CC$ with $B=\cup_m B_m$, then $I_{B_m} \to I_B$ in $L^2$, by monotone convergence. Considering the corresponding representations, 
\[I_{B_m} = M_0 + \int_{]0,\infty]\times\CE} H_t^m d\tilde\mu\]
from It\=o's isometry we have
\[\begin{split}E[(I_{B_m} - I_{B_{m'}})^2] &= E\Big[\Big(\int_{]0,\infty]\times\CE} H_t^m- H_t^{m'} d\tilde\mu\Big)^2\Big]\\& = E\Big[\int_{]0,\infty]\times\CE} (H_t^m- H_t^{m'})^2 d\la\tilde\mu\ra\Big]\end{split}\]
so $H_t^m$ converges in the Hilbert space $L^2(\la\tilde\mu\ra)$ to a process $H$. Hence there exists the limit
\[I_B = M_0 + \int_{]0,\infty]} H_t d\tilde\mu,\]
and we see that $B\in \CC$. Similarly if $\{B_m\}_{m\in\bN}$ is a decreasing sequence we have $\cap_m B_m \in \CC$. Therefore $\CC$ is a monotone class.

The monotone class theorem then states that $\CC$ contains the $\sigma$-algebra generated by $\cup_n \F^n_\infty$, that is, $\F_\infty$. Therefore, for any $B\in \F_\infty$ we have the desired representation of $M=E[I_B|\F_{(\cdot)}]$. By Lemma \ref{lem:approxterminal}, this implies that $\tilde\mu$ has the predictable representation property.

\textbf{(3)} Suppose that for any stopping time $S$, there exists a stopping time $T$ with $T>S$ on the set $\{S<\infty\}$ such that
\[\mu^{(S)}(\omega, [0,t]\times dz) := \mu(\omega, [S, (t+S)\wedge T]\times dz)\]
has the predictable representation property in the filtration $\{\tilde\F_t^{(S)}=\F_{(t+S)\wedge T}\}_{t\geq0}$.

Then just as in the proof of Theorem \ref{thm:MRTfinitejump}, we can use transfinite induction to establish the result. As in Lemma \ref{lem:transfiniteinductionlemma1}, there exists a family of stopping times $\{T_\alpha\}_{\alpha\in\gimel}$ such that
\begin{itemize}
 \item $\mu^{(T_\alpha)}(\omega, [0,t]\times dz) := \mu(\omega, [T_\alpha, (t+T_\alpha)\wedge T_{\alpha+1}]\times dz)$ has the predictable representation property in the filtration $\{\tilde\F_t^{(T_\alpha)}=\F_{(t+T_\alpha)\wedge T_{\alpha+1}}\}_{t\geq0}$ 
\item $T_\alpha<T_\beta$ for all $\alpha<\beta$ on the set $\{T_\alpha<\infty\}$, and
\item there exists $\alpha\in\gimel$ such that  $T_{\beta}=\infty$ for all $\beta\geq \alpha$.
\end{itemize}
 Therefore, as the representation holds in $\{\tilde\F_t^{(T_\alpha)}\}_{t\geq0}$, we know that we can write
\[M_{T_{\alpha+1}} = M_{T_\alpha} + \int_{]T_\alpha, T_{\alpha+1}]} H_s d\mu\]
for some predictable function $H$. As in the proof of Theorem \ref{thm:MRTfinitejump}, given the representation up to $T_\alpha$ for all $\alpha<\beta$, where $\beta$ is a limit ordinal, we can use martingale convergence to construct the representation up to $T_\beta$. Transfinite induction establishes existence of a representation for all times. (As these arguments are only trivial modifications of the proof of Theorem \ref{thm:MRTfinitejump}, the full details are left to the reader.)

As this establishes that the recursions in the definition of parochial measures preserve the predictable representation property, the theorem is proven.
\end{proof}

\section{A counterexample}
One might wonder whether the assumption that the random measure is parochial is truly needed, that is, whether \emph{all} random measures in $\tilde\CA^1_\sigma$ yield martingale random measures with the predictable representation property. This is an intuitively reasonable proposition, as all random measures with independent increments have the predictable representation property (Lemma \ref{lem:parochialexamples} and Theorem \ref{thm:MRTparochial}), and all finite-activity random measures have the preditctable representation property (Theorem \ref{thm:MRTfinitejump}). It is, however, false, as the following example will show.

Let $\CE=\{1/n\}_{n\in\bN}$. Let $W$ be a Brownian motion in its natural filtration and $\mu$ be a random measure in $\tilde\CA^1_\sigma$ such that, in the (right-continuous, complete) filtration generated by $W$ and $\mu$, 
\[\mu_p(dt\times\{1/n\}) = ne^{W_t}dt.\]
Now consider the natural filtration of $\mu$, denoted  $\{\CF_t\}_{t\geq0}$,  and let $\nu$ be the compensator of $\mu$ in this filtration. If $\mu-\nu$ has the predictable representation property, then all $\{\CF_t\}_{t\geq0}$-local martingales must be purely discontinuous. Therefore, $W$ must not be $\{\CF_t\}_{t\geq0}$-adapted. We will show that this leads to a contradiction.

By construction and Theorem \ref{thm:QVrandommeasure}, we know
\[\begin{split} E[\mu(\omega, &]t,t+h]\times\{1/n\})|W_t]\\
&= E\Big[\int_{]t,t+h]\times\{1/n\}}d\mu_p\Big|W_t\Big] = E\Big[\int_{]t,t+h]}ne^{W_s} ds\Big|W_t\Big]\\& = ne^{W_t}\int_{]t,t+h]}E[e^{W_s-W_t}|W_t] ds = ne^{W_t}\int_{]0,h]}e^{s^2/2} ds \\
&= nh(1+o(h^2))e^{W_t}\\
\text{Var}[\mu(\omega, &]t,t+h]\times\{1/n\})|W_t]\\& = E\Big[\int_{]t,t+h]\times \{1/n\}}d\la \mu-\mu_p\ra\Big|W_t\Big] = nh(1+o(h^2))e^{W_t}
\end{split}\]
Chebyshev's inequality then implies that,
\[\bP\bigg(\Big|\frac{\mu(\omega, ]t,t+h]\times\{1/n\})}{nh} - (1+o(h^2))e^{W_t}\Big| \geq \epsilon\bigg) \leq \frac{(1+o(h^2))E[e^{W_t}]}{nh\epsilon^2}.\]
As $e^{W_t}$ is integrable, taking a limit in probability with respect to $n$ we have
\[\lim_{h\to 0} \lim_{n\to \infty} \frac{\mu(\omega, ]t,t+h]\times\{1/n\})}{nh} = \lim_{h\to 0}(1+o(h^2))e^{W_t} = e^{W_t}.\]
Therefore, as our filtration is assumed to be right-continuous, we know $e^{W}$, and hence $W$, is adapted to $\{\CF_t\}_{t\geq0}$, giving a contradiction.

\section{Including diffusion terms}
Using similar techniques, we can obtain the following representation in spaces which also include continuous martingales.

\begin{definition}
 We say that a continuous martingale $Y$ has right-deterministic volatility if for every stopping time $S$ there exists a stopping time $T$ with $T>S$ on the set $\{S<\infty\}$  and an increasing $\F_S$-measurable map $\Gamma^{(S)}(\omega):\bR\to\bR$ such that $I_{\llbracket S, T\rrbracket}\la Y \ra = I_{\llbracket S, T\rrbracket} \Gamma^{(S)}$. 
\end{definition}
\begin{theorem}\label{thm:MRTrightdeterministicvol}
 If $Y$ is a continuous martingale with right-deterministic volatility, in its natural filtration, then $Y$ has the predictable representation property.
\end{theorem}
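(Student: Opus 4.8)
The plan is to mimic the proof of Theorem~\ref{thm:MRTfinitejump} (and part (3) of Theorem~\ref{thm:MRTparochial}), replacing the single-jump building block with the classical Brownian martingale representation theorem on a deterministic-volatility interval. The key observation is that on an interval $\llbracket S,T\rrbracket$ on which $\la Y\ra = \Gamma^{(S)}$ is $\F_S$-measurable, the time-changed process $\beta_u := Y_{(\Gamma^{(S)})^{-1}(u+\Gamma^{(S)}(S))}$ is (conditionally on $\F_S$) a Brownian motion, and the filtration it generates is exactly $\{\F_{(t+S)\wedge T}\}$ up to the $\F_S$-null sets; so It\=o's representation theorem gives, for any uniformly integrable martingale $M$, a predictable $\phi$ with $M_{T} = M_{S} + \int_{\rrbracket S,T\rrbracket}\phi_s\,dY_s$. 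This is the analogue of Theorem~\ref{Thm15.14}, and the only genuinely new ingredient.

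First I would reduce to a uniformly integrable martingale $M$ by localization, exactly as in the earlier proofs. Then I would set up the transfinite recursion: by right-deterministic volatility, starting from $S_0 = 0$ we obtain $T_0 > 0$ (on $\{S_0<\infty\}$, trivial here) with $\la Y\ra$ deterministic on $\llbracket 0,T_0\rrbracket$; iterating and taking suprema at limit ordinals, Lemma~\ref{lem:transfiniteinductionlemma1}'s argument (monotone-increasing map into $\extreal$, stabilisation at some $\alpha\in\gimel$) produces a family $\{S_\alpha\}_{\alpha\in\gimel}$ of stopping times with $S_\alpha < S_{\alpha+1}$ on $\{S_\alpha<\infty\}$, eventually equal to $\infty$, and with $\la Y\ra$ deterministic (relative to $\F_{S_\alpha}$) on each $\llbracket S_\alpha, S_{\alpha+1}\rrbracket$. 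I would prove by transfinite induction the statement $P(\alpha)$: there is a predictable $\phi$ with $M_{S_\alpha} = M_0 + \int_{\rrbracket 0,S_\alpha\rrbracket}\phi_s\,dY_s$. The base case $\alpha=0$ is trivial; the successor step applies the conditional Brownian representation on $\llbracket S_\alpha, S_{\alpha+1}\rrbracket$ and pastes integrands as $\tilde\phi = I_{\rrbracket 0,S_\alpha\rrbracket}\phi + I_{\rrbracket S_\alpha,S_{\alpha+1}\rrbracket}\bar\phi$; the limit step uses martingale convergence, $M_{S_\beta-} = \lim_{\alpha\uparrow\beta} M_{S_\alpha}$ a.s., to get a representation on $\rrbracket 0, S_\beta\llbracket$, and then continuity of $Y$ (so $Y$ has no jump, $M_{S_\beta} = M_{S_\beta-}$, since a limit ordinal time $S_\beta$ is predictable and $Y$ continuous) closes the gap with no extra term. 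Once $P(\alpha)$ holds for all $\alpha$, eventual equality of $S_\alpha$ with $\infty$ gives $M_\infty = M_0 + \int_{\rrbracket 0,\infty\rrbracket}\phi_s\,dY_s$, and stopping at $t$ and conditioning gives the representation of $M$.

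There is one subtlety worth flagging at the limit step beyond what appears in Theorem~\ref{thm:MRTfinitejump}: I should check that the limiting integrand $\phi$ on $\rrbracket 0, S_\beta\llbracket$ glued from the $\phi$'s at earlier ordinals is genuinely predictable and $Y$-integrable up to $S_\beta$, i.e.\ that $\int_{\rrbracket 0,S_\beta\rrbracket}\phi^2\,d\la Y\ra < \infty$; this follows because $M$ is a (uniformly integrable) martingale, so $M_{S_\beta-}^2 - \la M\rangle_{S_\beta-}$ is a UI martingale and $\int_{\rrbracket 0,S_\beta\rrbracket}\phi^2\,d\la Y\ra = \la M\ra_{S_\beta-} < \infty$ a.s., exactly as the integrability argument in Theorem~\ref{Thm15.14}. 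The main obstacle, and the step deserving the most care, is the conditional Brownian representation on a single interval: justifying that the natural filtration of $Y^{(S)}$ coincides (up to completion) with the filtration of the time-changed Brownian motion $\beta$, handling the $\F_S$-measurable (random) time change $\Gamma^{(S)}$ by conditioning on $\F_S$ and invoking a regular-conditional-probability version of It\=o's theorem, and dealing with the degenerate case where $\Gamma^{(S)}$ is constant on part of $[S,T]$ (so $Y$ is constant there and the representation is trivial with $\phi=0$). Everything downstream of that is a routine transcription of the transfinite-induction machinery already developed, so, in the spirit of the remark at the end of the proof of Theorem~\ref{thm:MRTparochial}, the bulk of those details can be left to the reader.
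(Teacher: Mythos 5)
Your proposal follows essentially the same route as the paper's (much terser) proof: transfinite induction over the stopping times at which the measurability of the volatility changes, with the classical Brownian representation theorem applied via the Dambis--Dubins--Schwarz time change on each interval, and continuity of $Y$ handling the limit ordinals. The subtleties you flag (identifying the natural filtration with that of the time-changed Brownian motion, the degenerate constant case, integrability of the glued integrand) are genuine and are left implicit in the paper, so your write-up is if anything more careful.
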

\begin{proof}
 As before, we can use transfinite induction. Between the stopping times at which the measurability of the volatility changes, the result follows from the classical representation theorem for Brownian motion, coupled with the Dambis--Dubins--Schwarz theorem. Continuity guarantees that the result holds at every limit ordinal, so the result holds everywhere.
\end{proof}

\begin{remark}
 If we think of a Brownian motion as the limit of a series of jump processes with infinitely many infinitesimally small jumps, the statement that $Y$ has right deterministic volatility can be seen as a  natural limit of the statement that the compensator of the jumps is parochial. By Lemma \ref{lem:parochialexamples}, if the compensator if right-deterministic then it is parochial, which is what we are assuming here.
\end{remark}

\begin{theorem}\label{thm:MRTparochialcharacteristicsofX}
 Let $X$ be a real valued local martingale with  continuous martingale part $X^c$ and associated jump measure $\mu^X$. The semimartingale characteristics of $X$ are given by $\la X^c\ra$ and $\mu^X_p$.

Let $\{\CF_t\}_{t\geq0}$ be the filtration generated by $X$. If $\la X^c\ra$ is right-deterministic and $\mu^X$ is parochial (both in their natural filtrations), then $X$ satisfies a `weak' predictable representation property in its natural filtration,  that is any local martingale can be written
\[M_t = M_0 + \int_{]0,t]} H_s dX^c + \int_{]0,t]\times \bR} G_s d(\mu^X-\mu^X_p),\]
for some appropriate functions $H$ and $G$.
\end{theorem}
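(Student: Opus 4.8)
The plan is to combine the jump representation of Theorem \ref{thm:MRTparochial} with the continuous representation of Theorem \ref{thm:MRTrightdeterministicvol}, running the same transfinite induction over $\gimel$ as in the proofs of Theorems \ref{thm:MRTfinitejump} and \ref{thm:MRTrightdeterministicvol}. First I would reduce, by a monotone-class argument of the type in Lemma \ref{lem:approxterminal}, to representing $E[I_B\mid\F_{(\cdot)}]$ for $B$ in a convenient generating algebra; and, using right-determinism of $\la X^c\ra$ together with parochiality of $\mu^X$, I would further reduce (via the transfinite induction set up below) to a single \emph{good} stochastic interval $]S,T]$ on which $\la X^c\ra$ and $\mu^X_p$ coincide with an $\F_S$-measurable $\Gamma^{(S)}$ and an $\F_S$-measurable random measure $\nu^S$ respectively. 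On such an interval the semimartingale characteristics of $X$ are, conditionally on $\F_S$, deterministic, so by the L\'evy--It\=o decomposition for processes with conditionally independent increments (cf.\ \cite{Jacod2003}), given $\F_S$ the continuous part $X^c$ and the jump part of $X$ are independent, the sub-filtrations $\{\F^c_t\}$ and $\{\F^J_t\}$ they generate satisfy $\tilde\F^{(S)}_t=\F_S\vee\F^c_t\vee\F^J_t$, and each factor already carries the predictable representation property in its own sub-filtration --- $X^c$ because $\la X^c\ra$ is conditionally deterministic (Dambis--Dubins--Schwarz and the Brownian representation theorem, exactly as in Theorem \ref{thm:MRTrightdeterministicvol}), and $\mu^X-\mu^X_p$ because an $\F_S$-measurable compensator is parochial by Lemma \ref{lem:parochialexamples}(ii), so Theorem \ref{thm:MRTparochial} applies.

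Next I would glue the two representations. If $M^1 = E[I_{B_1}\mid\tilde\F^{(S)}_{(\cdot)}] = M^1_S + \int H\,dX^c$ comes from the continuous side and $M^2 = E[I_{B_2}\mid\tilde\F^{(S)}_{(\cdot)}] = M^2_S + \int G\,d(\mu^X-\mu^X_p)$ from the jump side, then conditional independence given $\F_S$ makes $M^1M^2$ a martingale, while $M^1$ continuous and $M^2$ purely discontinuous give $[M^1,M^2]=0$; integration by parts then yields
\[M^1_t M^2_t = M^1_S M^2_S + \int_{]S,t]} M^2_{s-}H_s\,dX^c + \int_{]S,t]\times\bR} M^1_{s-}G_s\,d(\mu^X-\mu^X_p),\]
which is of the asserted joint form, and one checks along the way that $E[I_{B_1}I_{B_2}\mid\tilde\F^{(S)}_t]=M^1_tM^2_t$. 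Since products $f_0 I_{B_1}I_{B_2}$ with $f_0$ $\F_S$-measurable, $B_1\in\F^c_\infty$, $B_2\in\F^J_\infty$, span a dense subset of $L^2(\tilde\F^{(S)}_\infty)$, a monotone-class and $L^2$-closure argument --- using orthogonality of the continuous and purely discontinuous integrals and the $\CH^p$/BDG estimates already employed in Lemma \ref{lem:approxterminal} --- extends the joint representation to all of $L^2(\tilde\F^{(S)}_\infty)$, and then the localization-and-pasting steps of Lemma \ref{lem:approxterminal} extend it to all local martingales on the interval.

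Finally the transfinite induction is carried out verbatim as before: at each stage one picks a single stopping time $T>S$ on $\{S<\infty\}$ that works simultaneously for $\la X^c\ra$ (as in Theorem \ref{thm:MRTrightdeterministicvol}) and for $\mu^X$ (as in the definition of right-parochial), obtaining a family $\{T_\alpha\}_{\alpha\in\gimel}$ as in Lemma \ref{lem:transfiniteinductionlemma1}; the successor step pastes the interval representation just constructed onto the representation up to $T_\alpha$, and at a limit ordinal $\beta$ one passes to $T_\beta$ by martingale convergence --- the continuous part contributing no jump at the predictable time $T_\beta$, and the jump contribution handled exactly as in the proof of Theorem \ref{thm:MRTfinitejump}. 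Since for every $\omega$ some $T_\alpha(\omega)=\infty$, this gives a representation of $M_\infty$, and stopping at $t$ and conditioning gives the stated formula. The main obstacle is the reduction on a good interval: showing cleanly that, conditionally on $\F_S$, $X^c$ and the jumps are independent and that the integrals against $dX^c$ and $d(\mu^X-\mu^X_p)$ together span $L^2$, so that no ``cross'' martingales are missed, and then carrying the $\F_S$-measurable dependence faithfully through the whole transfinite construction; the computational parts --- integration by parts, the BDG estimates, the pasting --- are routine, and as with Theorem \ref{thm:MRTfinitejump} many of the details can be left to the reader.
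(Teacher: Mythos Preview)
Your proposal takes a different route from the paper's, and the key reduction step has a gap.

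The paper's proof does \emph{not} redo a transfinite induction. It uses Theorem \ref{thm:MRTrightdeterministicvol} and Theorem \ref{thm:MRTparochial} as black boxes: $X^c$ already has the PRP in its own sub-filtration $\{\F^c_t\}$, and $\mu^X-\mu^X_p$ already has the PRP in $\{\F^d_t\}$. The combination is then done globally at the terminal level: for $B=B^c\cap B^d$ with $B^c\in\F^c_\infty$, $B^d\in\F^d_\infty$, one writes $I_{B^c}=\alpha^c+\int\beta^c\,dX^c$ and $I_{B^d}=\alpha^d+\int\beta^d\,d(\mu^X-\mu^X_p)$, multiplies, and applies It\=o's product rule (your integration-by-parts step, which is correct). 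A monotone class argument, exactly as in part (2) of the proof of Theorem \ref{thm:MRTparochial}, then carries the representation from the algebra generated by $\F^c_\infty\cup\F^d_\infty$ to all of $\F_\infty$, and Lemma \ref{lem:approxterminal} finishes. No conditional independence and no transfinite induction are needed at this stage; all of that work is hidden inside the already-proven sub-filtration PRPs.

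Your reduction to a ``good'' interval $]S,T]$ on which $\mu^X_p$ coincides with an $\F_S$-measurable $\nu^S$ is not justified by parochiality. That conclusion is exactly the \emph{right-deterministic compensator} hypothesis of Lemma \ref{lem:parochialexamples}(iv), which is only one way of being parochial. The recursive definition also contains (P2), a purely spatial decomposition $\{A_n\}$ with no temporal localization at all; a measure which is parochial solely through (P2) need not admit any stopping time $T>S$ on which the compensator becomes $\F_S$-measurable, so the conditional-L\'evy picture (independence of $X^c$ and the jumps given $\F_S$) is unavailable. To salvage your approach you would have to run an outer induction on the recursive structure of ``parochial'' --- handling (P2) by a spatial monotone-class argument and (P3) by your transfinite induction --- while simultaneously carrying the continuous part along; this is considerably heavier than the paper's route, which cleanly separates the two representation problems and only combines them once, via the product formula, at the end.
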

We say $X$ has a `weak' predictable representation property as the integral is not with respect to $X$ directly, but with respect to $X^c$ and $\mu^X-\mu^X_p$.
\begin{proof}
By Theorem \ref{thm:MRTrightdeterministicvol} we know $X^c$ has the predictable representation property in its natural filtration, which we denote $\{\F^c_t\}_{t\geq0}$, and by Theorem \ref{thm:MRTparochial}, $(\mu^X-\mu^X_p)$ has the predictable representation property in the natural filtration of $\mu^X$, which we denote $\{\F^d_t\}_{t\geq0}$. By Lemma \ref{lem:approxterminal}, it is enough for us to show the theorem holds when $M_\infty = I_{B}$ for some $B\in \F_\infty$, as all other cases can be approximated by this. We will use a monotone class argument.

Let $\CC$ be the family of sets $B\in \F_\infty$ for which we have the desired representation. Suppose $B=B^c\cap B^d$, for some $B^c\in\F^c_\infty$ and $B^d\in\F^d_\infty$. Then, as we can represent $E[I_{B^c}|\F^c_{(\cdot)}]$ and $E[I_{B^d}|\F^d_{(\cdot)}]$ we know there exists $\alpha^c, \beta^c, \alpha^d, \beta^d$ of appropriate integrability and dimensions that
\[I_{B^c\cap B^d} = I_{B^c}I_{B^d} = \Big(\alpha^c + \int_{]0,t]} \beta^c_s dX^c\Big)\Big(\alpha^d + \int_{]0,t]\times \bR} \beta^d_s d(\mu^X-\mu^X_p)\Big).\]
By It\=o's product rule, as $\int_{]0,t]\times \bR} \beta^d d(\mu^X-\mu^X_p)$ is purely discontinuous and $\int_{]0,t]} \beta^c_s dX^c$ is continuous, 
\[\begin{split}I_{B^c\cap B^d} &= \alpha^c\alpha^d + \int_{]0,t]} \Big(\alpha^d + \int_{]0,t[\times \bR} \beta^d d(\mu^X-\mu^X_p)\Big)\beta^c_s dX^c \\
   &\qquad + \int_{]0,t]\times \bR}\Big(\alpha^c + \int_{]0,t]} \beta^c_s dX^c\Big)\beta^d_s d(\mu^X-\mu^X_p)
  \end{split}
\]
which gives our desired representation. By linearity of the integral we also have a representation for $I_{B^c\cup B^d} = I_{B^c}+ I_{B^d}- I_{B^c\cap B^d}$, and so $\CC$ contains the algebra of sets given by finite intersections and unions of sets in $\F^c_\infty$ and $\F^d_\infty$. As in part (2) of the proof of Theorem \ref{thm:MRTparochial}, the It\=o isometry shows $\CC$ forms a monotone class, so it follows that $\CC$ contains all of $\F^c_\infty\otimes\F^d_\infty=\F_\infty$, as desired.
\end{proof}

\begin{corollary}
Under the assumptions of Theorem \ref{thm:MRTparochialcharacteristicsofX}, any continuous martingale can be written as an integral with respect to $X^c$ and any purely discontinuous martingale can be written as an integral with respect to $\mu^X-\mu^X_p$.
\end{corollary}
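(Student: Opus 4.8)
The plan is to deduce this corollary directly from the weak predictable representation property established in Theorem~\ref{thm:MRTparochialcharacteristicsofX}, by appealing to the uniqueness of the canonical decomposition of a local martingale into its continuous and purely discontinuous parts. First I would take any continuous martingale $N$ in the filtration $\{\CF_t\}_{t\geq 0}$. Applying Theorem~\ref{thm:MRTparochialcharacteristicsofX} gives a representation
\[
N_t = N_0 + \int_{]0,t]} H_s\, dX^c + \int_{]0,t]\times \bR} G_s\, d(\mu^X-\mu^X_p)
\]
for suitable $H$ and $G$. Now $\int_{]0,\cdot]} H_s\, dX^c$ is a continuous local martingale (as a stochastic integral against the continuous martingale $X^c$) and $\int_{]0,\cdot]\times\bR} G_s\, d(\mu^X-\mu^X_p)$ is a purely discontinuous local martingale (by the definition of the stochastic integral against a martingale random measure, which is constructed precisely as a purely discontinuous local martingale). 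Since $N$ is already continuous, and the decomposition of a local martingale into continuous plus purely discontinuous parts is unique up to indistinguishability, the purely discontinuous part $\int_{]0,\cdot]\times\bR} G_s\, d(\mu^X-\mu^X_p)$ must vanish, so $N_t = N_0 + \int_{]0,t]} H_s\, dX^c$, which is the claimed representation.

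The symmetric argument handles the purely discontinuous case: if $N$ is a purely discontinuous martingale, apply Theorem~\ref{thm:MRTparochialcharacteristicsofX} and then use uniqueness of the decomposition to conclude that the continuous part $\int_{]0,\cdot]} H_s\, dX^c$ is indistinguishable from zero, leaving $N_t = N_0 + \int_{]0,t]\times\bR} G_s\, d(\mu^X-\mu^X_p)$. One should also record that $N_0$ is $\CF_0$-measurable and can be absorbed into the constant term, so the statement holds as written. It is worth remarking that for a purely discontinuous martingale we conventionally take $N_0 = 0$ (or absorb $N_0$ into the $\CF_0$-measurable constant), so no issue arises there.

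The only genuine point requiring care -- and the step I expect to be the main obstacle -- is justifying that the two pieces of the weak representation really are, respectively, continuous and purely discontinuous, and that the continuous/purely-discontinuous decomposition is unique. The first is essentially by construction: $X^c$ is continuous so $H*X^c$ is continuous, while the stochastic integral $G*(\mu^X-\mu^X_p)$ is defined in the excerpt to be ``the purely discontinuous local martingale $X$ such that $\Delta X_t = \Delta(G*\tilde\mu)_t$''. The uniqueness of the decomposition $N = N^c + N^d$ is the classical fact that if $L$ is both continuous and purely discontinuous (a continuous local martingale orthogonal, in the sense of $[\cdot,\cdot]$, to all purely discontinuous local martingales, hence in particular orthogonal to itself) then $[L] \equiv 0$ and so $L \equiv L_0$; this is standard (e.g. Dellacherie and Meyer \cite{Dellacherie1975}, or Jacod and Shiryaev \cite{Jacod2003}) and requires no new work here. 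Assembling these observations yields the corollary in a few lines.
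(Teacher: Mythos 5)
Your argument is correct and is precisely the intended one: the paper states this corollary without proof, treating it as an immediate consequence of Theorem \ref{thm:MRTparochialcharacteristicsofX} together with the uniqueness of the decomposition of a local martingale into its continuous and purely discontinuous parts, which is exactly what you spell out. The two points you flag (that $H*X^c$ is continuous, that $G*(\mu^X-\mu^X_p)$ is purely discontinuous by the definition of the stochastic integral against a martingale random measure) are indeed the only things to check, and your handling of them is fine.
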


\begin{remark}
Further generalizations are possible, where, for example, $\mu^X$ is parochial and $\la X^c\ra$ is `right-adapted to the natural filtration of $\mu^X$', or where $\la X^c\ra$ is right-deterministic and $\mu^X$ satisfies the definition of parochial with `right-deterministic' replaced by `right-adapted to the natural filtration of $X^c$'. A sequential argument, using the representation theorem in the first process to derive the general result, is then possible.  This will allow, for example, a representation for our counterexample to be obtained, in terms of both the random measure $\mu$ and the Brownian motion $W$. 
\end{remark}

\bibliographystyle{plain}
\bibliography{../RiskPapers/General}
\end{document}